\newcounter{dummy}
\newcommand\myitem[1][]{\item[#1]\refstepcounter{dummy}\def\@currentlabel{#1}}
\newsavebox{\measure@tikzpicture}
	\def\tikz@width{#1}%
	\def\tikzscale{1}\begin{lrbox}{\measure@tikzpicture}%
	\edef\tikzscale{\pgfmathresult}%
\DeclareSymbolFontAlphabet{\mathbb}{AMSb}
\newcommand{\thistheoremname}{}
\newtheorem*{genericthm*}{\thistheoremname}
\newenvironment{namedthm*}[1]
{\renewcommand{\thistheoremname}{#1}%
	\begin{genericthm*}}
	{\end{genericthm*}}
\newcommand{\absolutevalue}[1]{|#1|}
\newcommand{\Bairespace}[1][]{
	\ifthenelse{\equal{#1}{}}{\functions{\N}{\N}}{\functions{#1}{\N}}
}
\newcommand{\bbL}{\mathbb{L}}
\newcommand{\bbX}{\mathbb{X}}
\newcommand{\Borelhomomorphism}{\le_B}
\newcommand{\Cantorspace}[1][]{
	\ifthenelse{\equal{#1}{}}{\functions{\N}{2}}{\functions{#1}{2}}
}
\newcommand{\completegraph}[1]{K_{#1}}
\newcommandx{\concatenation}[2][1 = undefined, 2 = undefined]{
	\ifthenelse{\equal{#1}{undefined}}{{}\smallfrown}{
		\ifthenelse{\equal{#2}{undefined}}{\bigoplus #1}{\bigoplus_{#1} #2}
	}
}
\newcommand{\continuoushomomorphism}{\le_c}
\newcommand{\definedterm}[1]{\emph{#1}}
\newcommand{\domain}[1]{\mathrm{dom}(#1)}
\newcommand{\from}{\colon}
\newcommandx{\functions}[3][3 =]{
	\ifthenelse{\equal{#3}{}}{#2^{#1}}{#2_{#3}^{#1}}
}
\newcommand{\Gzero}[1][]{
	\ifthenelse{\equal{#1}{}}
	{\mathbb{G}_0}
	{\mathbb{G}_{0,n}}
}
\newcommandx{\Hzero}[2][2 = undefined]{
	\ifthenelse{\equal{#2}{undefined}}
	{\mathbb{H}_{#1}}
	{\mathbb{H}_{#1, #2}}
}
\newcommandx{\intersection}[2][1 =, 2 =]{
	\ifthenelse{\equal{#1}{}}{\cap}{
		\ifthenelse{\equal{#2}{}}{\bigcap #1}{{\bigcap_{#1} #2}}
	}
}
\newcommand{\Lzero}[1][]{\ifthenelse{\equal{#1}{}}{\bbL_0}{L_{0, #1}}}
\newcommand{\Lzerospace}[1][]{\ifthenelse{\equal{#1}{}}{\bbX_0}{X_{0, #1}}}
\newcommand{\mathand}{\text{ and }}
\newcommand{\modulo}[1]{\ (\text{mod } 2)}
\newcommand{\N}{\mathbb{N}}
\newcommand{\pair}[2]{(#1, #2)}
\newcommandx{\product}[2][1 =, 2 =]{
	\ifthenelse{\equal{#1}{}}{\times}{
		\ifthenelse{\equal{#2}{}}{\prod #1}{{\prod_{#1} #2}}
	}
}
\newcommandx{\sequence}[2][2 = undefined]{
	\ifthenelse{\equal{#2}{undefined}}{(#1)}{
		(#1)_{#2}
	}
}
\newcommandx{\set}[2][2 = undefined]{
	\ifthenelse{\equal{#2}{undefined}}{\{ #1 \}}{
		\{ #1 \suchthat #2 \}
	}
}
\newcommandx{\sets}[3][3 =]{
	\ifthenelse{\equal{#3}{}}{[#2]^{#1}}{[#2]^{#1}_{#3}}
}
\newcommand{\suchthat}{\mid}
\renewcommand{\restriction}[2]{#1 \upharpoonright #2}
\newcommand{\triple}[3]{(#1, #2, #3)}
\newcommandx{\union}[2][1 =, 2 =]{
	\ifthenelse{\equal{#1}{}}{\cup}{
		\ifthenelse{\equal{#2}{}}{\bigcup #1}{{\bigcup_{#1} #2}}
	}
}
\newcommand{\Borel}{Bor\-el\xspace}
\newcommand{\Kechris}{Kech\-ris\xspace}
\newcommand{\Polish}{Po\-lish\xspace}
\newcommand{\Solecki}{So\-lec\-ki\xspace}
\newcommand{\Todorcevic}{To\-dor\-\v{c}ev\-ic\xspace}
\newtheorem{theorem}{Theorem}[section]
\newtheorem{lemma}[theorem]{Lemma}
\newtheorem{Claim}[theorem]{Claim}
\newtheorem{claim}[theorem]{Claim}
\newtheorem{proposition}[theorem]{Proposition}
\newtheorem{problem}[theorem]{Problem}
\newtheorem*{largegaps}{Theorem \ref{t:large_gaps}}
\theoremstyle{definition}
\newtheorem{definition}[theorem]{Definition}
\numberwithin{equation}{section}
\newcommand{\Z}{\mathbb{Z}}
\newcommand{\dlength}{\qopname\relax o{dilength}}
\newcommand{\length}{\qopname\relax o{length}}
\newcommand{\bd}{\begin{definition}}
	\newcommand{\ed}{\end{definition}}
\DeclareMathOperator{\dom}{dom}
\DeclareMathOperator{\mgs}{mgs}
\DeclareMathOperator{\dist}{dist}
\DeclareMathOperator{\didistance}{didist}
\newcommand{\distance}[3]{\ifthenelse{\isempty{#3}}{\dist(#1,#2)}{\dist^{#3}(#1,#2)}}
\newcommand{\didist}[3]{\ifthenelse{\isempty{#3}}{\didistance(#1,#2)}{\didistance^{#3}(#1,#2)}}
\newcommand{\digraph}[3]{\ifthenelse{\equal{#1}{b}}{\mathbb{#2}_{#3}}
	{{#2}_{#3}}}
\newcommand{\linegraph}[3]{\ifthenelse{\equal{#1}{b}}{\mathbb{#2}_{#3}}
	{#2_{#3}}}
\newcommand{\specialvertex}[2]{s_{#1}}
\newcommand{\underlyingspace}[3]{\ifthenelse{\equal{#1}{b}}{\mathbb{#2}_{#3}}
	{#2_{#3}}}
\newcommand{\distanceset}[2]{\ifthenelse{\isempty{#2}}{D(#1)}{D^{#2}(#1)}}
\newcommand{\Borelchromatic}[1]{\chi_B(#1)}
\newcommand{\connecteder}[1]{E_{#1}}
\newcommand{\concatt}{%
	\mathbin{\raisebox{1ex}{\scalebox{.7}{$\frown$}}}%
}
\newcommand{\directed}[1]{\mathbb{L}_{#1}}
\newcommand{\aStage}[1]{n^{#1}}
\newcommand{\aCode}[1]{b^{#1}}
\newcommand{\aCodeB}[1]{c^{#1}}
\newcommand{\aCodeD}[1]{d^{#1}}
\newcommand{\aMapV}[1]{\phi^{#1}}
\newcommand{\aMapE}[1]{\psi^{#1}}
\newcommand{\aBoundStage}[1]{\aCodeB{#1},\aStage{#1}}
\newcommand{\aDifferent}[2]{\aCodeB{#2},\aStage{#1},\aStage{#2}}
\newcommand{\aCodeStage}[1]{\aCode{#1},\aStage{#1}}
\newcommand{\imageConf}[2]{A(#1,#2)}
\newcommand{\BorelInvariant}[2]{B(#1,#2)}
\newcommand{\twocoloring}[2]{c_{#1,#2}}
\newcommand{\oddpair}[1]{odd $#1$-pair}
\newcommand{\lgth}{\length}
\pgfplotsset{soldot/.style={color=blue,only marks,mark=*}}
\begin{document}
	
	
	\begin{abstract}
	    We show that there is a \Borel graph on a standard \Borel space of \Borel chromatic number three that admits a \Borel homomorphism to every analytic graph on a standard \Borel space of \Borel chromatic number at least three. Moreover, we characterize the \Borel graphs on standard \Borel spaces of vertex-degree at most two with this property, and show that the analogous result for digraphs fails.
	\end{abstract}
	
	\author[R. Carroy]{Rapha\"{e}l Carroy}
	\address{
		Rapha\"{e}l Carroy \\
		Kurt G\"{o}del Research Center for Mathematical Logic \\
		Universit\"{a}t Wien \\
		W\"{a}hringer Stra{\ss}e 25 \\
		1090 Wien \\
		Austria
	}
	\email{raphael.carroy@univie.ac.at}
	\urladdr{
		http://www.logique.jussieu.fr/~carroy/indexeng.html
	}
	
	\author[B.D. Miller]{Benjamin D. Miller}
	\address{
		Benjamin D. Miller \\
		Kurt G\"{o}del Research Center for Mathematical Logic \\
		Universit\"{a}t Wien \\
		W\"{a}hringer Stra{\ss}e 25 \\
		1090 Wien \\
		Austria
	}
	\email{benjamin.miller@univie.ac.at}
	\urladdr{
		http://www.logic.univie.ac.at/benjamin.miller
	}
	
	\author[D. Schrittesser]{David Schrittesser}
	\address{
		David Schrittesser \\
		Kurt G\"{o}del Research Center for Mathematical Logic \\
		Universit\"{a}t Wien \\
		W\"{a}hringer Stra{\ss}e 25 \\
		1090 Wien \\
		Austria
	}
	\email{david.schrittesser@univie.ac.at}
	\urladdr{
		http://homepage.univie.ac.at/david.schrittesser/
	}
	
	\author[Z. Vidny\'{a}nszky]{Zolt\'{a}n Vidny\'{a}nszky}
	\address{
		Zolt\'{a}n Vidny\'{a}nszky \\
		Kurt G\"{o}del Research Center for Mathematical Logic \\
		Universit\"{a}t Wien \\
		W\"{a}hringer Stra{\ss}e 25 \\
		1090 Wien \\
		Austria
	}
	\email{zoltan.vidnyanszky@univie.ac.at}
	\urladdr{
		http://www.logic.univie.ac.at/~vidnyanszz77/
	}
	
	\thanks{The authors were supported in part by FWF Grants
		P28153 and P29999.}
	
	\keywords{Borel bipartite, Borel coloring}
	
	\subjclass[2010]{Primary 03E15, 28A05}
	
	\title[Minimal definable graphs]{Minimal definable
		graphs of definable chromatic number at least three}
	
	\maketitle

	\section{Introduction}
	
	The investigation of definable chromatic numbers is a blooming field of research with numerous applications, as can be found in \cite{bernshteyn2016measurable,conleymeas,conley2014antibasis,csoka2016borel,lecomtezeleny2,lecomtezeleny,marksdet,marks,pequignot2017finite}. The survey \cite{kechris2015descriptive} contains many of the latest results.
	
	Recall that a \definedterm{digraph} on a set $X$ is an irreflexive set $G \subseteq  X^2$, and a \definedterm{graph} on $X$ is a symmetric digraph on $X$. A \definedterm{$\kappa$-coloring} of a digraph $G$ on $X$ is a map $c \from X \to \kappa$ such that $\pair{x}{y} \in G \implies c(x) \neq c(y)$ for all $x, y \in X$. We will be interested in digraphs on spaces $X$ which are endowed with a standard Borel structure. In this case, one may consider the \definedterm{\Borel chromatic number} of $G$, or $\Borelchromatic{G}$, defined as the least cardinal $\kappa$ that admits a standard \Borel structure with respect to which there is a \Borel $\kappa$-coloring of $G$. (Note that a standard Borel structure exists on $\kappa$ iff $\kappa \in \{0,1,2,\dots,\aleph_0, 2^{\aleph_0}\}$, and for each such $\kappa$ it is unique up to Borel isomorphism.)
	
	A \definedterm{homomorphism} from a digraph $G$ on $X$ to a digraph $G'$ on $X'$ is a map $\phi \from X \to X'$ such that $\pair{x}{y} \in G \implies \pair{\phi(x)}{\phi(y)} \in G'$ for all $x, y \in X$. When $G$ and $G'$ are digraphs on standard \Borel spaces, we write $G \Borelhomomorphism G'$ to indicate the existence of a \Borel homomorphism from $G$ to $G'$. Similarly, when $G$ and $G'$ are digraphs on \Polish spaces, we write $G \continuoushomomorphism G'$ to indicate the existence of a continuous homomorphism from $G$ to $G'$. It is easy to see that $G \leq_B G' \implies \Borelchromatic{G} \leq \Borelchromatic{G'}$. The \definedterm{complete graph} on $\kappa$ is given by $\completegraph{\kappa} = \set{\pair{\alpha}{\beta} \in \kappa^2}[\alpha \neq \beta]$. It is also easy to see that if $\kappa$ is endowed with a standard \Borel structure, then $\Borelchromatic{G} \leq \kappa \iff G \leq_B \completegraph{\kappa}$.
	
	The systematic investigation of \Borel chromatic numbers was initiated by \Kechris, \Solecki, and \Todorcevic \cite{kechris1999borel}. One of their primary successes was the isolation of a \Borel graph $\Gzero$ on $\Cantorspace$ of uncountable \Borel chromatic number that admits a continuous homomorphism to every analytic \Borel graph on a \Polish space of uncountable \Borel chromatic number. This result lies at the heart of a vast number of seemingly unrelated theorems in descriptive set theory (see, e.g., \cite{ClemensConleyMiller, ClemensLecomteMiller, miller2012graph, Miller:E3, benen2}), often yielding shorter, more elegant proofs and substantial generalizations. \Todorcevic and the fourth author \cite{toden} recently ruled out the most straightforward analogs of the $\Gzero$ dichotomy for graphs of \Borel chromatic number at least $n$, where $4 \le n \le \aleph_0$.
	
	We will introduce a \Borel graph $\Lzero$ that plays a role analogous to $\Gzero$ for graphs of \Borel chromatic number at least three:
	
	\begin{theorem} \label{t:main_intro}
	    Suppose that $G$ is an analytic graph on a Polish space. Then exactly one of the following holds:
	    \begin{enumerate}
	        \item The graph $G$ has \Borel chromatic number at most two.
	        \item There is a continuous homomorphism from $\Lzero$ to $G$.
	    \end{enumerate}
	\end{theorem}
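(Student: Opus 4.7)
The plan is to follow the template of the $\Gzero$ dichotomy of \Kechris--\Solecki--\Todorcevic, adapted so that the universal object forces odd cycles rather than merely edges. First, I would verify that the two alternatives are mutually exclusive: any continuous homomorphism $\phi \colon \Lzero \to G$ would pull back a Borel $2$-coloring of $G$ to a Borel $2$-coloring of $\Lzero$, contradicting $\Borelchromatic{\Lzero} \geq 3$, which is established elsewhere in the paper. So it suffices to show that if $\Borelchromatic{G} \geq 3$ then alternative (2) holds.

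For the main direction, assume $\Borelchromatic{G} \geq 3$ and pass to the effective category, taking $G$ to be $\ls$ on a recursively presented \Polish space $X$. Let
\[
A = \bigcup\{\, S \subseteq X \,:\, S \text{ is } \ls \text{ and } G \upharpoonright S \text{ admits a } \lbo\text{-measurable $2$-coloring}\,\}.
\]
Standard effective uniformization shows that $A$ is $\ls$ and that, by pasting the $\lbo$-witnesses together, $G \upharpoonright A$ carries a Borel $2$-coloring. Since $\Borelchromatic{G} \geq 3$, the complement $B := X \setminus A$ is non-empty, and $G \upharpoonright B$ inherits in a strong form the failure of $2$-colorability.

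Next, I would build a continuous homomorphism $\psi \colon \Lzero \to G \upharpoonright B$ by a recursive Lusin-scheme construction indexed by the dense sequence of finite sequences $(s_n)_{n \in \om}$ used to define $\Lzero$. At stage $n$, I maintain non-empty relatively open sets $U_t \subseteq B$ for $t \in \functions[n]{2}$ of shrinking diameter. When the generator $s_n$ demands an $\Lzero$-edge between two branches, I locate a witnessing $G$-edge (or, more precisely, a $G$-walk of the appropriate odd parity) between representative points, and refine the $U_t$'s around them. Taking branch-limits then produces a continuous $\psi$ with $\psi(\Lzero) \subseteq B$ carrying $\Lzero$-edges to $G$-edges.

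The main obstacle is the density lemma that powers this recursion: for every $x \in B$, every open neighborhood $V$ of $x$, and every prescribed parity $\sigma \in 2$, there must exist $y \in V \cap B$ joined to $x$ by a $G$-walk of parity $\sigma$. If this failed at some $x$, then assigning to each point of the $G$-component of $x$ in a suitable $\ls$ neighborhood the parity of its $G$-distance to $x$ would give a $\lbo$-measurable $2$-coloring on a $\ls$ set containing $x$, placing $x$ into $A$ and yielding a contradiction. The new technical feature compared to $\Gzero$ is exactly this \emph{parity-tracking}: one must propagate the odd-walk requirement coherently across the recursion so that the odd-cycle structure built into $\Lzero$ is realized inside $B$. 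This is the step I expect to demand the most care, and where the specific combinatorial design of $\Lzero$ must be exploited.
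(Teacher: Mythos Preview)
Your proposal follows the $\Gzero$ template too closely and misses the central difficulty. The passage from $L_{0,n}$ to $L_{0,n+1}$ does not merely add an edge: it inserts a path of a \emph{specific} odd length (namely $c(n+1)+2$) between the two copies of $s_n$, so to extend a partial homomorphism you must produce a $G$-walk of \emph{exactly} that length between the two candidate images of $s_n$. Your density lemma only yields an odd walk of some uncontrolled length $\ell$; padding lets you reach any odd length $\ge \ell$, but if $\ell$ exceeds the required length you cannot shorten. (Think of $G$ with large odd girth: once your neighborhoods are small, the shortest available odd walk may be far longer than the stage allows.) Relatedly, the scheme you describe is indexed by $t \in \Cantorspace[n]$ with a dense sequence $(s_n)$, which is the $\Gzero$ picture; the approximating vertex sets $X_{0,n}$ for $\Lzero$ are strictly larger, and the $s_n$ here are fixed endpoints, not a dense sequence. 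Finally, your justification of the density lemma is imprecise: ``parity of $G$-distance to $x$'' defines a $2$-coloring only when the component of $x$ is bipartite, which must first be deduced from the assumed failure of the lemma.

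The paper resolves the length obstruction by splitting the proof into two independent steps. First (Theorem~\ref{t:g0style}), one runs a derivative/configuration argument in which the length of the connecting path at stage $n$ is \emph{read off from $G$}: one takes whatever odd walk the argument provides, and its length determines $c(n+1)$. This yields $\digraph{b}{L}{b} \leq_c G$ for some \oddpair{\aleph_0} $b=(c,d)$ depending on $G$; the technical heart is Lemma~\ref{l:terminal}, proved via Claims~\ref{cl:terminal1}--\ref{cl:coloring2}, which show that a bound on the directed lengths of odd paths between the relevant points forces Borel $2$-colorability. Second (Proposition~\ref{p:large_homo}\eqref{p:continuous_homo}), one shows $\Lzero \leq_c \linegraph{b}{L}{c}$ for \emph{every} $c \in (2\N+1)^\N$, using that the defining sequence of $\Lzero$ tends to infinity together with an inductive extension lemma (Lemma~\ref{l:inductive}). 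Composition then gives $\Lzero \leq_c G$. It is precisely this second step---decoupling the fixed universal graph $\Lzero$ from the $G$-dependent graph $\linegraph{b}{L}{c}$---that is missing from your outline, and without it the direct construction you sketch does not go through.
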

	
	\noindent
	It is easy to see that there is no analogous finite basis in the case of finite graphs, where the notions of \Borel graph and \Borel chromatic number coincide with their classical counterparts.
	
	The graph $\Lzero$ can be described using an inverse limit-like construction as follows: Let $\Lzerospace[0]$ be a two-point set, let $\Lzero[0]$ be the unique connected graph on $\Lzerospace[0]$, and define $X_0=\Lzerospace[0]$. Given $n \in \N$, a finite set $\Lzerospace[n]$, and a tree $\Lzero[n]$ on $\Lzerospace[n]$ of vertex degree at most two, let $\Lzerospace[n+1]$ be the disjoint union of two copies of $\Lzerospace[n]$ with a set $X_{n+1}$ of cardinality $2n+2$, fix a point $s_n \in \Lzerospace[n]$ of $\Lzero[n]$-vertex degree one, fix a a tree $L_{n+1}$ on $X_{n+1}$ of vertex degree at most two, and let $\Lzero[n+1]$ be the graph on $\Lzerospace[n+1]$ whose restriction to each copy of $\Lzerospace[n]$ is the corresponding copy of $\Lzero[n]$, whose restriction to $X_{n+1}$ is $L_{n+1}$, and which connects the two copies of $s_n$ in $\Lzerospace[n+1]$ to distinct points of $X_n$ of $L_n$-vertex degree one (see the Figure below). Let $\pi_{n+1} \from \Lzerospace[n+1] \setminus X_{n+1}\to \Lzerospace[n]$ be the projection sending each point in one of the two copies of $\Lzerospace[n]$ within $\Lzerospace[n+1]$ to the corresponding point of $\Lzerospace[n]$. Let $\Lzerospace$ be the set of pairs of the form $\pair{n}{x}$, where $n \in \N$ and $x \in X_n \times \product[m > n][{\Lzerospace[m]}]$, such that $x(m) = \pi_{n+m+1}(x(m+1))$ for all $m \in \N$. Let $\Lzero$ be the graph on $\Lzerospace$ consisting of all pairs $\pair{\pair{n}{x}}{\pair{n'}{x'}} \in \Lzerospace \times \Lzerospace$ with the property that $\pair{x(m)}{x'(m)} \in \Lzero[m]$ for all $m \ge \max(n, n')$. We will give a slightly different description of this graph in \S\ref{s:preliminaries}.
	
	\begin{figure}
	\label{pic}
	\tikzstyle{every node}=[circle, draw, fill=black!50, inner sep=0.75pt]
	    \begin{scaletikzpicturetowidth}{.95\textwidth}
		    \begin{tikzpicture}[scale=\tikzscale]
        		\tikzmath{
			        function drawpath(\startx,\starty,\endx,\endy,\llength,\pathcolor) {
				        \llength = int(\llength);
				        int \i;
				        for \i in {0,..,\llength}{%
					        \j = \i/(\llength+1);%
					        \x = \startx + \j*(\endx-\startx);%
					        \y = \starty + \j*(\endy-\starty);%
					        \l = (\i+1)/(\llength+1);%
					        \z = \startx + \l*(\endx-\startx);%
					        \w = \starty + \l*(\endy-\starty);%
					        {
						        \draw (\x,\y) node {};
						        \draw (\z,\w) node {};
					        };
					        if (\pathcolor == 0) then
					        {
						        {
						        	\draw[green,bend right] (\z,\w) -- (\x,\y);
						        };
					        };
					        if (\pathcolor == 1) then
					        {
						        {
							        \draw[blue, bend left] (\z,\w) -- (\x,\y);
						        };
					        };
					        if (\pathcolor == 2) then
					        {
						        {
							        \draw[yellow] (\z,\w) -- (\x,\y);
						        };
					        };
					        if (\pathcolor == 3) then
					        {
						        {
							        \draw[red] (\z,\w) -- (\x,\y);
						        };
					        };
				        };
			        };
			        function drawingthegraph(\x,\y,\n,\s) {
				        \r = 1;
				        \t = 1;
				        if (\n == 0) then
				        {
					        \kx = 0;
					        \ky = 0;
				        };
				        if (\n == 1) then
				        {
					        \kx = 2;
					        \ky = 0;
				        };
				        if (\n == 2) then
				        {
					        \kx = 0;
					        \ky = 2;
				        };
				        if (\n == 3) then
				        {
					        \kx = 4;
					        \ky = 4;
				        };
				        if (\n == 4) then
				        {
					        \kx = 5;
					        \ky = 7;
				        };
				        if (\n == 0) then
				        {
					        \ky = 0;
				        };
				        if (\n > 0) then
				        {
					        \k = int(\n-1);
					        drawpath(\x,\y,(\x+\s*\kx),(\y+\s*\ky),2*\n-2,\n-1);%
					        drawingthegraph(\x,\y,\k,\s);
					        drawingthegraph((\x+\s*\kx),(\y+\s*\ky),\k,\s);
				        };
			        };
			        drawingthegraph(0,0,0,1);
			        drawingthegraph(3,0,1,1);
			        drawingthegraph(6,0,2,1);
			        drawingthegraph(9,0,2,1);
			        drawingthegraph(10,1,2,1);
			        drawpath(11,0,12,1,4,2);
			        \ratioo = 1;
			        \ratiou = 1;
			        \r = 3.5;
			        \t = 0;
			        \ri = 1;
			        \ti = 1;
			        drawingthegraph(9+\r,0+\t,2,\ratioo);
			        drawingthegraph(9+\r+\ri,0+\t+\ti,2,\ratioo);
			        drawpath(9+\r+\ratioo*2,0+\t+\ratioo*0,9+\r+\ri+\ratioo*2,0+\t+\ti+\ratioo*0,4,2);
			        \ru = 4.2;
			        \tu = -0.5;
			        \riu = 1;
			        \tiu = 1;
			        drawingthegraph(9+\ru,0+\tu,2,\ratiou);
			        drawingthegraph(9+\ru+\riu,0+\tu+\tiu,2,\ratiou);
			        drawpath(9+\ru+\ratiou*2,0+\tu+\ratiou*0,9+\ru+\riu+\ratiou*2,0+\tu+\tiu+\ratiou*0,4,2);
			        drawpath(9+\r+\ratioo*2,0+\t+\ratioo*2,9+\ru+\ratiou*2,\tu+\ratiou*2,6,3);
		        }
		    \end{tikzpicture}
		\end{scaletikzpicturetowidth}
	\caption{The first four stages of the construction of $\Lzero$.}
	
	\end{figure}
	
	Our proof of Theorem \ref{t:main_intro} splits into two parts: We first establish the existence of continuum-many $\Lzero$-like \Borel digraphs that serve as a basis for the analytic digraphs on \Polish spaces of \Borel chromatic number at least three under continuous homomorphism, and then show that the undirected versions of any of these digraphs admits a continuous homomorphism to the undirected version of any other.
	
	Suppose that $X$ is a set and $L$ is a graph on $X$ of vertex degree at most two. We say that a set $Y \subseteq  X$ has \definedterm{large gaps} if every $L$-component contains $L$-connected sets disjoint from $Y$ of arbitrarily large finite cardinality. When $X$ is a standard \Borel space, we say that $L$ has the \definedterm{large gap property} if there is a \Borel set $B \subseteq  X$ with large gaps that intersects every $L$-component. We say that $L$ has the \definedterm{large gap property modulo a two-colorable set} if there is an $L$-invariant \Borel set $M \subseteq  X$ such that $\restriction{L}{(X \setminus M)}$ has the large gap property and $\Borelchromatic{\restriction{L}{M}} \leq 2$. We also characterize the family of \Borel graphs $L$ on standard \Borel spaces of vertex degree at most two satisfying the analog of Theorem \ref{t:main_intro} in which the existence of a continuous homomorphism from $\Lzero$ to $G$ is replaced with the existence of a \Borel homomorphism from $L$ to $G$:
	
	\begin{theorem}
		\label{t:large_gaps}
		Suppose that $X$ is a standard \Borel space and $L$ is an acyclic \Borel graph on $X$ of vertex degree at most two. Then the following are equivalent:
		\begin{enumerate}
			\item There is a \Borel homomorphism from $L$ to every \Borel graph $G$ of \Borel chromatic number at least three.
			\item The graph $L$ has the large gap property modulo a two-colorable set.
		\end{enumerate}
		
	\end{theorem}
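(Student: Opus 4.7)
For the direction $(2) \Rightarrow (1)$, fix a Borel graph $G$ with $\Borelchromatic{G} \geq 3$. By Theorem \ref{t:main_intro} there is a continuous homomorphism $\Lzero \continuoushomomorphism G$, so it suffices to construct a Borel homomorphism $L \Borelhomomorphism \Lzero$. Let $M$ witness the two-colorable portion and $B \subseteq X \setminus M$ witness the large gap property, as provided by (2). Since $M$ is $L$-invariant, we define the homomorphism separately on $M$ and on $X \setminus M$ and then combine them without conflict. On $M$, compose a Borel two-coloring of $\restriction{L}{M}$ with the inclusion of a single $\Lzero$-edge. On $X \setminus M$, each $L$-component is (by acyclicity and degree at most two) a path, and $B$ carves it into $B$-free sub-paths of arbitrarily large finite sizes. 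The inverse-limit construction of $\Lzero$ supplies, at each level $n$, a path on $X_{n+1}$ of size $2n+2$ with distinguished gluing points $s_n$. The plan is to assign, in a Borel way, the $B$-free sub-paths of the $L$-components to levels of $\Lzero$ whose sizes accommodate them, then to trace out the resulting walk through the inverse-limit structure of $\Lzero$ consistently across the gluing points.

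For the direction $(1) \Rightarrow (2)$, argue the contrapositive. Let $M$ be the maximal $L$-invariant Borel subset of $X$ with $\Borelchromatic{\restriction{L}{M}} \leq 2$; this is well-defined because the family of $L$-invariant Borel sets admitting a Borel two-coloring is closed under countable unions (disjoint invariance lets one freely combine two-colorings), so a standard exhaustion argument yields a maximum. Assume $\restriction{L}{(X \setminus M)}$ fails the large gap property. Applying Theorem \ref{t:main_intro} with $G = \Lzero$, the identity witnesses option (2) and hence rules out option (1), so $\Borelchromatic{\Lzero} \geq 3$; thus (1) supplies a Borel homomorphism $\phi \from L \to \Lzero$. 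Within $\Lzero$ one has a natural sparse Borel set $S$ whose complement in every $\Lzero$-component contains arbitrarily large connected pieces, for instance the union of the gluing points $s_n$ across all levels. The plan is to show that, after absorbing into $M$ any part of $X \setminus M$ on which $\phi$ lands in a two-colorable sub-structure of $\Lzero$ (impossible beyond a null piece by maximality of $M$), the set $\phi^{-1}(S) \cap (X \setminus M)$ furnishes the missing large-gap witness, contradicting our assumption.

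The main obstacle is most acute in $(2) \Rightarrow (1)$: matching $B$-free sub-paths of $L$-components (of variable and unbounded lengths, distributed across possibly uncountably many $L$-components) to the fixed-size levels of $\Lzero$ must be accomplished uniformly and coherently. This will require Lusin-Novikov uniformization applied to the Borel equivalence relation on $B$ induced by adjacency along $L$, together with an inductive scheme that maintains edge-preservation across level boundaries and honors the inverse-limit structure of $\Lzero$. In $(1) \Rightarrow (2)$, the delicate point is that $\phi$ may collapse vertices, so the preimage $\phi^{-1}(S)$ is not automatically a large-gap witness; establishing that any substantial collapse lands in a two-colorable $\Lzero$-sub-structure, and hence can be folded into $M$ without contradiction to its maximality, is the crux.
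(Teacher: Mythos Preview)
Your outline for $(2) \Rightarrow (1)$ is essentially the paper's approach: reduce to $L \leq_B \Lzero$ via Theorem~\ref{t:main_intro}, handle the two-colorable invariant part $M$ trivially, and on $X \setminus M$ build the homomorphism level by level using the large-gap witness. The paper packages the inductive construction you describe as Claim~\ref{cl:large} (refining $B$ to an increasing sequence $(B_n)$ with $\mgs(B_n)\to\infty$ and finite $\restriction{L}{B_n}$-components, modulo a smooth remainder) together with Proposition~\ref{p:large_homo} and Lemma~\ref{l:inductive} (the actual extension step). Your identification of the main technical difficulty is accurate.

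Your argument for $(1) \Rightarrow (2)$, however, has a genuine gap. You posit a \emph{maximal} $L$-invariant Borel set $M$ with $\Borelchromatic{\restriction{L}{M}} \le 2$, justified by closure under countable unions and a ``standard exhaustion argument.'' Closure under countable unions does not yield a maximum Borel set: the union of all members of this family need not be Borel, and there is no obvious countable cofinal subfamily. Even granting such an $M$, you then need that on $X \setminus M$ the map $\phi$ is onto on components (so that the preimage of a large-gap witness in $\Lzero$ is again a large-gap witness), and your route to this---``absorb into $M$ any part on which $\phi$ lands in a two-colorable sub-structure''---presupposes exactly the lemma you have not stated, namely that the set where $\phi$ fails to be onto is itself two-colorable.

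The paper sidesteps both issues by reversing the order: it takes $\phi \from L \to \Lzero$ first and \emph{defines} $M$ to be the set of $x$ such that $\phi$ does not map $[x]_{E_L}$ onto $[\phi(x)]_{E_{\Lzero}}$. Claim~\ref{cl:basiclines}\eqref{c:ontohomo} shows this $M$ is Borel with $\Borelchromatic{\restriction{L}{M}} \le 2$ (using the boundary point of the image to assign parities). On $X \setminus M$ the map $\phi$ is onto on components by construction, so pulling back the explicit large-gap witness $B = \{(0,0,r) : r \in \Cantorspace\}$ of $\Lzero$ (Claim~\ref{c:basic}\eqref{c:largegap}) immediately gives a large-gap witness for $\restriction{L}{X \setminus M}$. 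No contrapositive, no maximality, no exhaustion is needed.
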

	
	An \definedterm{oriented graph} on a set $X$ is an antisymmetric digraph on $X$. Whereas the oriented analog of $\Gzero$ satisfies the analog of the \Kechris-\Solecki-\Todorcevic dichotomy for analytic digraphs, we also show that there is no such analog of Theorem \ref{t:main_intro}:
	
	\begin{theorem}
		\label{t:antibasis}
		Suppose that $G$ is an analytic digraph on a Polish space of \Borel chromatic number at least three. Then there is a sequence
		$\sequence{L_t}[t \in \Cantorspace]$ of \Borel oriented graphs on \Polish spaces of \Borel chromatic number three that admit continuous homomorphisms to $G$ but for which every analytic digraph on a standard \Borel space that admits a \Borel homomorphism to at least two distinct graphs of the form $L_t$ has \Borel chromatic number at most two.
	\end{theorem}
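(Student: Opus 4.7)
The strategy is to construct the family $\sequence{L_t}[t \in \Cantorspace]$ by exploiting the continuum-many $\Lzero$-like \Borel oriented digraphs produced in the first part of the proof of Theorem \ref{t:main_intro}, and then verify their pairwise incompatibility via an orientation-mismatch argument. Since $\Borelchromatic{G} \geq 3$, the basis property furnishes a basis digraph $\mathbb{L}^{*}$ with a continuous homomorphism $\iota \from \mathbb{L}^{*} \to G$. The graph $\mathbb{L}^{*}$ admits an inverse-limit presentation analogous to that of $\Lzero$, with each finite stage $n$ featuring a gadget carrying a distinguished oriented edge $e_n$ whose reversal preserves both chromatic number three and the underlying \Borel structure.

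For each $t \in \Cantorspace$, I would define $L_t$ on the same \Polish vertex space as $\mathbb{L}^{*}$ by reversing the orientation of $e_n$ exactly when $t(n)=1$. The chromatic number computations for $\mathbb{L}^{*}$ should transfer: the odd-cycle obstructions at each scale are invariant under single-edge reversals, and compatible $3$-colorings can be constructed directly, so each $L_t$ has $\Borelchromatic{L_t}=3$. To produce continuous homomorphisms $L_t \to G$ for all $t \in \Cantorspace$ simultaneously, I would iterate the basis result within the sub-digraph of $G$ obtained from $\iota(\mathbb{L}^{*})$; since the resulting restrictions continue to have \Borel chromatic number at least three, no local orientation choice can obstruct the inductive construction, and all $2^{\aleph_0}$ variants map continuously into $G$.

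For the antibasis property, suppose $H$ is analytic and admits \Borel homomorphisms $\phi \from H \to L_t$ and $\psi \from H \to L_{t'}$ for some $t \neq t'$, and let $n$ be least with $t(n) \neq t'(n)$. Consider the pair map $\pair{\phi}{\psi} \from H \to L_t \product L_{t'}$. The image lies in the sub-digraph of the product determined by orientation compatibility at each level; at level $n$, the orientations of $e_n$ disagree between the two factors, so a combinatorial analysis of the level-$n$ gadget and its propagation through the inverse-limit structure should show that the induced digraph on $H$ is bipartite, yielding a \Borel $2$-coloring of $H$ and hence $\Borelchromatic{H} \leq 2$.

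The main obstacle will be this last step: transferring the single-level orientation mismatch into a global, \Borel $2$-coloring of $H$. Executing this requires propagating the incompatibility across all inverse-limit stages $m \geq n$ using the self-similarity of the $\Lzero$-like construction, and then verifying that the resulting partition of $H$ is \Borel measurable rather than merely a set-theoretic coloring; the technical core is the bookkeeping showing that an $H$-odd cycle would produce an orientation contradiction in the product at level $n$.
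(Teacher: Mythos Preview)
Your proposal takes a genuinely different route from the paper, and it has two serious gaps.

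\textbf{Gap 1: producing the homomorphisms $L_t \to G$.} You build $L_t$ from a fixed basis digraph $\mathbb{L}^*$ by \emph{reversing} distinguished edges. But a homomorphism of digraphs must respect orientation, so reversing an edge of $\mathbb{L}^*$ can destroy the composite $L_t \to \mathbb{L}^* \to G$. Your suggestion to ``iterate the basis result'' inside $\iota(\mathbb{L}^*)$ does not repair this: the basis theorem only guarantees \emph{some} oriented $\mathbb{L}_b$ mapping into $G$, and gives you no control over which edge orientations occur, so you cannot arrange all $2^{\aleph_0}$ sign patterns simultaneously. The paper sidesteps this entirely: its $L_t$ are \emph{restrictions} of a single $\mathbb{L}_b$ to invariant Borel sets $[P_t]_{E_c}$, so $L_t \hookrightarrow \mathbb{L}_b \to G$ is automatic.

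\textbf{Gap 2: the antibasis argument.} Your plan is that an orientation mismatch at a single level $n$ forces the product $L_t \times L_{t'}$ (on the relevant part) to be Borel bipartite. This does not work: a Borel homomorphism $\phi \from H \to L_t$ need not hit the reversed edge $e_n$ at all, and even if it does, the pair map $(\phi,\psi)$ can land far from the diagonal, in a region of the product where the level-$n$ disagreement is invisible. There is no mechanism by which a local orientation flip propagates to a global parity obstruction on an arbitrary $H$. The paper's argument uses a different, homomorphism-stable invariant: for an acyclic oriented graph, the \emph{directed distance} between two points is well-defined, and any homomorphism preserves it. Hence the set $D(C)$ of directed distances realized in a complete Borel section $C$ is monotone under $\leq_B$ (Lemma \ref{l:fromcanonical}). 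The paper then arranges, via a growth condition $(*)$ on $b$ and an almost-disjoint family, perfect sections $P_t \subseteq \mathbb{X}_c$ whose distance sets $D(P_t)$ are pairwise separated in the sense that large elements of $D(P_t)$ and $D(P_{t'})$ never have ratio in $[\tfrac14,4]$ (Lemma \ref{l:crucial}). If some $H$ with $\chi_B(H)\geq 3$ mapped to two of them, one reduces (again via Theorem \ref{t:g0style}) to $H=\mathbb{L}_{b^*}$ with $(*)$, and a Baire-category computation (Lemma \ref{l:saturation}) forces both pulled-back sections to contain directed distances in the \emph{same} window $[\tfrac12\Sigma(d^*(i)),\,2\Sigma(d^*(i))]$, contradicting the ratio separation.

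In short, the invariant you need is not ``which way does $e_n$ point'' but ``which directed distances are realized in a complete section''; the latter is what survives under Borel homomorphism and what can be made to separate a continuum of restrictions.
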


    One can view $\Lzero$ as being built via towers over a canonical acyclic graph $L$ on $\Cantorspace$ of vertex degree at most two that is not the graph of a \Borel function.
    In a future paper, we will establish a basis theorem for the analytic graphs on \Polish spaces of \Borel chromatic number at least three under the finer notion of injective continuous homomorphism. While the cardinality of the basis we will provide is necessarily (at least) that of the continuum, its elements are reminiscent of $\Lzero$, in that they too can be viewed as being built via towers, albeit this time over three canonical graphs: the graph $L$ over which $\Lzero$ is built, the graph of the odometer on $\Cantorspace$, and the graph of the unilateral shift on increasing $\N$-sequences of natural numbers (for a summary of the results, see, \cite{shortsum}).
	
	In \S\ref{s:preliminaries}, we collect the most important definitions and facts used in our arguments. In \S\ref{s:basis}, we give the first half of the proof of Theorem \ref{t:main_intro}. In \S\ref{s:large}, we give the second half and establish Theorem \ref{t:large_gaps}. In \S\ref{s:digraphs}, we establish our anti-basis result. In \S\ref{s:problems}, we discuss open problems.
	
	\section{Preliminaries and basic facts}
	\label{s:preliminaries}
	
	We refer the reader to \cite{kechrisbook} for general background on descriptive set theory. 
	
    For each ordered pair $\pair{x}{y}$, set $\pair{x}{y}^1 = \pair{x}{y}$ and $\pair{x}{y}^{-1} = \pair{y}{x}$. Define $B^{-1} = \set{\pair{x}{y}^{-1}}[\pair{x}{y} \in B]$ for all sets $B \subseteq  \functions{2}{X}$. Given a digraph $G$ on a set $X$ and $x,y \in X$, an (undirected) $G$-path from $x$ to $y$ is a pair $p = \pair{\sequence{x_0, \ldots, x_\ell}}{d_p}$ consisting of a finite sequence of vertices $\sequence{x_0,...,x_\ell}$ with $x_0 = x$ and $x_\ell = y$, and $d_p \in \set{\pm 1}^{\ell}$ such that $\pair{x_i}{x_{i+1}}^{d_p(i)} \in G$ for all $i < \ell$. In the case that $p$ is a $G$-path and $G$ is a graph, we will omit the second coordinate of $p$.

    For all $d \in \functions{<\N}{\set{\pm 1}}$, we use $\Sigma(d)$ to denote $\sum_{i \in \domain{d}} d(i)$. We set $\dlength(p) = \Sigma(d_p)$ and $\length(p) = \ell$ for the directed length and (undirected) length of $p$. Let $\dist_G(x, y)$ be the
    minimal length of a $G$-path from $x$ to $y$.
     
	It is easy to verify the next claim.
	
	\begin{claim}
		\label{c:distances} Let $G$ be an acyclic\footnote{Throughout the paper, the term ``acyclic'' will mean that there are no undirected cycles, that is, the graph $G \cup G^{-1}$ contains no cycles.} oriented graph on the space $X$, and $x,y \in X$. Then for any two $G$-paths $p$ and $p'$ from $x$ to $y$ we have $\dlength(p) = \dlength(p')$. 
	\end{claim}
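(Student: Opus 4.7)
The plan is to reduce both $G$-paths to a canonical form: the unique simple path from $x$ to $y$ in the underlying undirected graph $G \cup G^{-1}$, which by hypothesis is a forest. Suppose $p = \pair{\sequence{x_0, \ldots, x_\ell}}{d_p}$ is a $G$-path and $x_i = x_{i+2}$ for some $i$. Then the successive edges traverse the same undirected pair $\{x_i, x_{i+1}\}$; antisymmetry of $G$ (as an oriented graph) forbids both orientations from lying in $G$ simultaneously, forcing $d_p(i+1) = -d_p(i)$. Excising $x_{i+1}$ together with the entries $d_p(i), d_p(i+1)$ thus produces a strictly shorter $G$-path from $x$ to $y$ with the same $\dlength$.

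Iterating such excisions must terminate, yielding a \emph{reduced} $G$-path $q = \pair{\sequence{v_0, \ldots, v_k}}{d_q}$ from $x$ to $y$ with no backtracking (no index $i$ satisfying $v_i = v_{i+2}$) and with $\dlength(q) = \dlength(p)$. I claim $q$ is simple. Otherwise, choose indices $i < j$ with $v_i = v_j$ and $j-i$ minimal; the vertices $v_i, \ldots, v_{j-1}$ are then pairwise distinct, while $j-i \neq 1$ (by irreflexivity of $G$) and $j-i \neq 2$ (by reducedness). Hence $j - i \geq 3$, and $v_i, v_{i+1}, \ldots, v_{j-1}, v_j = v_i$ forms a cycle in $G \cup G^{-1}$, contradicting acyclicity.

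Since $G \cup G^{-1}$ is a forest, there is at most one simple path from $x$ to $y$, so applying the reduction to both $p$ and $p'$ produces the same reduced $G$-path. Consequently $\dlength(p) = \dlength(p')$. The argument hinges on the single-step observation that antisymmetry forces $d_p(i+1) = -d_p(i)$ at any backtracking, making $\dlength$ invariant under reduction; I do not anticipate a substantial obstacle.
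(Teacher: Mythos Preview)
Your proof is correct; the paper itself omits the argument entirely, remarking only that the claim is easy to verify. One point you might make explicit: when you assert that reducing $p$ and $p'$ yields \emph{the same} reduced $G$-path, you are tacitly using antisymmetry a second time to conclude that the direction sequence $d_q$ is uniquely determined by the vertex sequence (so that equal vertex sequences force equal $G$-paths, not just equal undirected paths) --- but this is immediate.
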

	
	Thus, for an oriented acyclic graph $\digraph{}{G}{}$ on the space $X$, and $x,y \in X$ defining \definedterm{$\didist{x}{y}{\digraph{}{G}{}}$} to be the directed length of a path from $x$ to $y$ makes sense. If it is clear from the context, we will omit the superscript from $\distance{\cdot}{\cdot}{}$ and $\didist{\cdot}{\cdot}{}$.

	Note also that the parity of $\dlength(p)$ and $\length(p)$ are the same. So, when referring to the parity of the length of a path, we will always omit the word ``directed".

	Define an equivalence relation $\connecteder{\digraph{}{G}{}}$ on $X$ by letting $x\connecteder{\digraph{}{G}{}}y$ iff there exists a $\digraph{}{G}{}$-path from $x$ to $y$. The $\connecteder{\digraph{}{G}{}}$ equivalence classes will be called the \definedterm{connected components or components of $\digraph{}{G}{}$}. For standard definitions and facts from the theory of equivalence relations (e.g., smoothness, saturation, countability) see \cite{gao}. As usual, a set $S \subseteq X$ will be called $G$-invariant if it is $\connecteder{G}$-invariant.
	
	The \definedterm{restriction of $\digraph{}{G}{}$ ($\connecteder{\digraph{}{G}{}}$) to $B$}, in notation $\restriction{\digraph{}{G}{}}{B}$ ($\restriction{\connecteder{\digraph{}{G}{}}}{B}$), is the digraph $\digraph{}{G}{} \cap B^2$ (the equivalence relation $\connecteder{\digraph{}{G}{}} \cap B^2$) on $B$. A set $B \subseteq X$ is called \definedterm{$\digraph{}{G}{}$-independent}
	if $B^2 \cap \digraph{}{G}{}=\emptyset$.	
	
	\textit{Definition of $\linegraph{b}{L}{0}$-type graphs.} Now we outline a general scheme for constructing Borel graphs, the graph $\linegraph{b}{L}{0}$ will be a particular example of such a construction. First we define finitary approximations to our graphs, parametrized by a sequence $c \in \Bairespace$. For all $n \in \N$, let $L_n$ denote the graph on $\set{\sequence{0},
		\ldots, \sequence{n}}$ with respect to which $\sequence{i}$ and
	$\sequence{j}$ are neighbors if and only if $\absolutevalue{i - j} =
	1$. For the rest of the paper we fix a sequence $ \sequence{\specialvertex{n}{}}[n\in \N]$ given by  $\specialvertex{0}{}=(c(0))$ and $\specialvertex{n}{}=(0)^n \concatt (1)$, for $n>0$.
	Define graphs $\linegraph{}{L}{c,n}$ on $\underlyingspace{}{X}{c,n}=\union[m \le n][{\set{0,
			\ldots, c(m)} \times \Cantorspace[n-m]}]$ by setting $\linegraph{}{L}{c,0} = L_{c(0)}$ and  $\linegraph{}{L}{c,n+1}$ to be the acyclic connected graph
	containing $\{(v_i \concatenation \sequence{j})_{i < 2}|
	j < 2 \mathand \sequence{v_i}[i < 2] \in \linegraph{}{L}{c,n}\}$ and
	$L_{c(n+1)}$ in which $\pair{\specialvertex{n}{}}{0}$ is a neighbor of $\sequence
	{0}$, and $\sequence{c(n+1)}$ is a neighbor of $\pair{\specialvertex{n}{}}{1}$. 
	
	Now set
	$\underlyingspace{b}{X}{c}= \set{\triple{n}{k}{r} \in \N \times
		\N \times \Cantorspace}[k \le c(n)]$, define $\pi_{c,n} \from \underlyingspace{b}{X}{c}
	\intersection (\set{0, \ldots, n} \times \N \times \Cantorspace) \to
	\underlyingspace{}{X}{c,n}$ by
	$\pi_{c,n}(m, k, r ) = \sequence{k} \concatenation \restriction{r}{(n -
		m)}$ for all $n \in \N$, and let $\linegraph{b}{L}{c}$ be the graph on
	$\underlyingspace{b}{X}{c}$ consisting of all pairs of the form $\sequence{\triple
		{n_i}{k_i}{r_i}}[i < 2]$ such that $\sequence{\pi_{c,n}(n_i, k_i,
		r_i)}[i < 2] \in \linegraph{}{L}{c,n}$ holds $\forall n \geq \max(n_0, n_1)$.
	
	Recall that in the introduction we have described the graph $\linegraph{b}{L}{0}=\linegraph{b}{L}{c}$ with $c(0)=1$, and $c(n)=2n-1$ for $n>0$.
	
	\textit{Definition of $\linegraph{b}{L}{0}$-type oriented graphs.} We modify slightly the preceding construction, considering oriented finitary approximations, which yield oriented Borel graphs as limits. 
	
	An extra parameter is necessary to encode the orientation of the graphs. For all $n \in \N$ and $d\in \{-1,1\}^{k}$ with $k >n$, let $\digraph{}{L}{n}^d$ denote the oriented graph on $\set{\sequence{0},
		\ldots, \sequence{n}}$ containing $\pair{(i)}{(j)}^{d(\max\{i,j\})}$ whenever $\absolutevalue{i - j} =
	1$. 
	
	Let $n \in \{0,1,\dots, \aleph_0\}$. In order to ease the notation, we will call a pair $b=(c,d)$ an \definedterm{\oddpair{n}} if $c \from 1+n \to 2\N+1$, $d \from 1+n \to \{-1,1\}^{<\N}$, $\absolutevalue{d(k)}=c(k)+2$ for all $k \leq n$.

	Given an \oddpair{\aleph_0} $b=(c,d)$, define graphs $\digraph{}{L}{b,n}$ on $\underlyingspace{}{X}{c,n}$ by setting $\digraph{}{L}{b,0} = \digraph{}{L}{c(0)}^{d(0)}$ and letting $\digraph{}{L}{b,n+1}$ be the acyclic connected oriented graph
	containing $\{(v_i \concatenation \sequence{j})_{i < 2}|
	j < 2 \mathand \sequence{v_i}[i < 2] \in \digraph{}{L}{b,n}\}$ and
	$\digraph{}{L}{c(n+1)}^{d(n+1)}$, in which \[\pair{\pair{\specialvertex{n}{}}{0}}{\sequence
		{0})}^{d(n+1)(0)}\text{ and }\pair{\sequence{c(n+1)}}{\pair{\specialvertex{n}{}}{1}}^{d(n+1)(c(n+1)+1)}\] are edges. 
	Finally, let $\digraph{b}{L}{b}$ be the graph on
	$\underlyingspace{b}{X}{c}$ consisting of all pairs of the form $\sequence{\triple
		{n_i}{k_i}{r_i}}[i < 2]$ such that $\sequence{\pi_{c,n}(n_i, k_i,
		r_i)}[i < 2] \in \digraph{}{L}{b,n}$ holds $\forall n \geq \max(n_0, n_1)$.
	
	\textit{Basic observations.} Note that for any $c \in\N^\N$ and any \oddpair{\aleph_0} $b$, the definitions of $\linegraph{}{L}{c,n}$, $X_{c,n}$, and $\digraph{}{L}{b,n}$ depend only on $(c(i))_{i \leq n}$ and $(b(i))_{i \leq n}$, respectively. For $n'>n$ natural numbers define $\pi_{c,n,n'} \from X_{c,n'}
	\intersection \{(k,t) \suchthat t \in 2^{n'-m}, \text{ for some } m \leq n\} \to
	X_{c,n}$ by 
	$\pi_{c,n,n'}(k,t) = \sequence{k} \concatenation \restriction{t}{(n-m)}$, where $m$ is chosen such that $t \in 2^{n'-m}$. Observe that \[\restriction{\pi_{c,n,n'} \circ \pi_{c,n'}}{\dom(\pi_{c,n})}=\pi_{c,n}\] holds.  
	
	Let us use the abbreviation $\connecteder{c}$ for $\connecteder{\linegraph{b}{L}{c}}$. We list a number of useful basic observations about the family of digraphs and graphs defined above. 
	
	\begin{claim}
		\label{c:basic} Assume that $b=(c,d)$ is an \oddpair{\aleph_0}. Then 
		\begin{enumerate}
			\item \label{c:polish} $\underlyingspace{b}{X}{c}$ is a closed subset of $\N \times \N \times 2^\N$, hence it is a Polish space with the subspace topology.
			\item \label{c:symmetry} $\linegraph{b}{L}{c}=\digraph{b}{L}{b} \cup \digraph{b}{L}{b}^{-1}$, $\linegraph{}{L}{c,n}=\digraph{}{L}{b,n} \cup \digraph{}{L}{b,n}^{-1}$.
			\item \label{c:oddistance} If for some $n,k \in \N, \varepsilon <2,t \in 2^{<\N}$ we have $(k) \concatt t \concatt (\varepsilon) \in \linegraph{}{L}{c,n}$, then $(k) \concatt t \concatt (1-\varepsilon) \in \linegraph{}{L}{c,n}$ and $\distance{(k) \concatt t \concatt (\varepsilon)}{(k) \concatt t \concatt (1-\varepsilon)}{\linegraph{}{L}{c,n}}$ is odd.
			\item \label{c:connected} Let $(n,k,r),(n',k',r') \in \underlyingspace{b}{X}{c}$ with $n \leq n'$. Then $(n,k,r) \connecteder{c} (n',k',r')$ if and only if $r=t \concatt r^*$, $r'=t' \concatt r^*$ with $\absolutevalue{t}-\absolutevalue{t'}=n'-n$ for some $r^* \in 2^\N$, $t,t' \in 2^{<\N}$.
			\item \label{c:regularity} $\linegraph{b}{L}{c}$ is acyclic and is $2$-regular, except for a single vertex of degree $1$. 
			\item \label{c:meagersaturation}  If $B \subseteq \underlyingspace{b}{X}{c}$ is Borel and meager, then so is $[B]_{\connecteder{c}}$.
			\item \label{c:category} If $B \subseteq \underlyingspace{b}{X}{c}$ is Borel and non-meager, then $\Borelchromatic{\restriction{\linegraph{b}{L}{c}}{[B]_{\connecteder{c}}}}=3$.

			\item \label{c:largegap} If $\limsup_{n} c(n)=\infty$ then $\linegraph{b}{L}{c}$ has the large gap property.

		\end{enumerate}
		
	\end{claim}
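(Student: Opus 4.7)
The plan is to treat the eight items as an interconnected bundle that all spring from the inductive structure of the finite approximations $L_{c,n}$. A preliminary induction on $n$ shows that each $L_{c,n}$ is an undirected finite path with endpoints $(0)^{n+1}$ and $s_n$: the inductive step glues two copies of $L_{c,n-1}$ along their $s_{n-1}$-endpoints through the middle path $L_{c(n)}$ of $c(n)+2$ added edges. This yields item~(3) directly: any vertex $(k)\concatt t\concatt(\varepsilon)$ of length at least two in $X_{c,n}$ has the form $v\concatt(\varepsilon)$ for some $v\in X_{c,n-1}$; both lifts $v\concatt(0)$ and $v\concatt(1)$ lie in $X_{c,n}$; and by acyclicity the unique $L_{c,n}$-path between them traverses the middle piece and has length $2\dist(v,s_{n-1})+c(n)+2$, which is odd since $c(n)\in 2\N+1$. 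Item~(1) follows because $k\le c(n)$ is a closed condition in $\N\times\N\times 2^\N$ (convergent sequences stabilize in their discrete coordinates), and item~(2) follows because orientations are preserved when lifting $L_{b,n}$ to $L_{b,n+1}$, so any pair in $\mathbb{L}_c$ lies either in $\mathbb{L}_b$ or in $\mathbb{L}_b^{-1}$ consistently across all large levels.

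For item~(4), the ``last bit'' of the projection $\pi_{c,n}(m,k,r)$ is $r(n-m-1)$. Adjacency of two projections in $L_{c,n}$ for large $n$ forces these last bits to agree eventually, and after a change of variable this is exactly the tail-with-offset condition; the invariant propagates along $\mathbb{L}_c$-paths (forward direction), and conversely any pair satisfying it can be joined by an explicit $\mathbb{L}_c$-path walking through middle pieces at sufficiently high levels. Item~(5) is inherited from the finite stages: each $L_{c,n}$ is a path of maximum degree two and is acyclic, and these properties pass to $\mathbb{L}_c$ (a hypothetical $\mathbb{L}_c$-cycle would project to a simple cycle in $L_{c,n}$ for large $n$, where distinct vertices of $\mathbb{X}_c$ eventually have distinct projections); the unique degree-one vertex corresponds to the coherent choice at every level of the endpoint $(0)^{n+1}$, namely $(0,0,(0)^\N)$.

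Items~(6) and~(7) are Baire-category statements. For~(6), by~(5) every $E_c$-class is countable, so $[B]_{E_c}$ decomposes as a countable union of images of $B$ under partial Borel bijections corresponding to fixed finite edge-codes in $\mathbb{L}_c$; each such bijection is a homeomorphism between clopen subsets of $\mathbb{X}_c$ and so preserves meagerness. For~(7), the upper bound $\Borelchromatic{\mathbb{L}_c}\le 3$ is the Kechris--Solecki--Todor\v{c}evi\'c bound for graphs of maximum degree two; for the lower bound, suppose a Borel $2$-coloring $\chi$ of $\restriction{\mathbb{L}_c}{[B]_{E_c}}$ exists. By~(3), each ``flip bit $j$'' partial involution of $\mathbb{X}_c$ is a Borel homeomorphism sending every vertex to one at odd $\mathbb{L}_c$-distance, so $\chi$ must reverse it; pairwise compositions preserve $\chi$, identifying $\chi^{-1}(0)$ and $\chi^{-1}(1)$ as complementary Borel subsets of $[B]_{E_c}$ invariant under the even-parity bit-flip subgroup. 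Since~(6) gives that $[B]_{E_c}$ is non-meager, a Baire-generic mixing argument — essentially the $E_0$-ergodicity of the bit-flip action on the tail of $r$ — forces both parts to be simultaneously comeager in $[B]_{E_c}$, a contradiction. Finally for~(8), take $B=\set{(n,k,r)\in\mathbb{X}_c}[n=0]$, which is clopen and by~(4) meets every component; for each $m\ge 1$ and each component the middle-piece slice $\set{(m,k,r)}[k\le c(m)]$ at the unique compatible $r$ is $\mathbb{L}_c$-connected, has cardinality $c(m)+1$, and avoids $B$, so $\limsup_n c(n)=\infty$ yields arbitrarily large gaps in every component.

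The main obstacle will be item~(7), and in particular justifying the claimed generic mixing: one must translate the tail-equivalence-with-offset structure of~(4) into an $E_0$-style action generated by bit-flips, and then carry out a Baire-category argument ruling out a Borel $2$-coloring on any non-meager saturation — this is where all the subtlety of the claim concentrates.
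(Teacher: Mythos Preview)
Your proposal is correct and, for items (1)--(6) and (8), follows essentially the same route as the paper (the paper's witness for (8) is $B=\{(0,0,r):r\in 2^{\N}\}$ rather than $\{(n,k,r):n=0\}$, but either works).

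For item (7) the paper's argument is more direct than yours. Rather than invoking ergodicity of a bit-flip action, the paper localizes to a basic open set $[(n,k,t)]$ in which $B\cap c^{-1}(i)$ is comeager for some color $i$, uses (6) to see that the $E_c$-saturation of the meager complement $[(n,k,t)]\setminus(B\cap c^{-1}(i))$ is still meager, and then simply picks a point $(n,k,t\concatt(\varepsilon)\concatt r')$ outside that saturation; by (4) its single-bit flip $(n,k,t\concatt(1-\varepsilon)\concatt r')$ lies in the same $E_c$-class and hence also in $B\cap c^{-1}(i)$, while by (3) the two points are at odd $\mathbb{L}_c$-distance, contradicting that $c$ is a $2$-coloring. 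Your approach is morally the same but packages the Baire-category step at a higher level of abstraction; note that the color classes are invariant only under the \emph{even-parity} bit-flip subgroup, so your ``$E_0$-ergodicity forces both comeager'' line needs one more beat (ergodicity of the even subgroup gives one class relatively comeager, then a single odd flip swaps them). The paper's concrete two-point argument sidesteps this entirely.
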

	\begin{proof} It is immediate from the definition of the graphs $\digraph{b}{L}{b}$ that \eqref{c:polish} and \eqref{c:symmetry} holds, while \eqref{c:oddistance} follows from the fact that $c \in (2\N+1)^\N$.
		
		In order to see \eqref{c:connected} note that if $p=(x_0,\dots,x_l)$ is an injective $\linegraph{b}{L}{c}$-path, then for a large enough $m$ the sequence $(\pi_{c,m}(x_0),\dots,\pi_{c,m}(x_l))$ is an injective $\linegraph{}{L}{c,m}$-path. It follows from the fact that the graphs $\linegraph{}{L}{c,m}$ are acyclic that $\distance{(n,k,r)}{(n',k',r')}{\linegraph{b}{L}{c}} \geq \distance{\pi_{c,m}(n,k,r)}{\pi_{c,m}(n',k',r')}{\linegraph{}{L}{c,m}}$ holds for every large enough $m$. In particular, as the distance of the vertices in different copies of $\linegraph{}{L}{c,m-1}$ in $\linegraph{}{L}{c,m}$ is at least $c(m)+2$, we have that $(n,k,r) \connecteder{c} (n',k',r')$ if and only if $\pi_{c,m}(n,k,r)$ and $\pi_{c,m}(n',k',r')$ are in the same copy of $\linegraph{}{L}{c,m-1}$ in $\linegraph{}{L}{c,m}$ for every large enough $m$, which is equivalent to the right side condition in \eqref{c:connected}.
		
		For \eqref{c:regularity} observe that for every $n$ every degree in $\linegraph{}{L}{c,n}$ is at most $2$, hence the same must be true for $\linegraph{b}{L}{c}$. Also, it is easy to see that if the degree of a vertex $(n,k,r) \in \underlyingspace{b}{X}{c}$ is $<2$ then for every large enough $n'$ the degree of $\pi_{c,n'}(n,k,r)$ in $\linegraph{}{L}{c,n'}$ must be $<2$. It follows that this is only possible if $(n,k,r)=(0,0,(0)^\N)$. Finally, acyclicity follows from the acyclicity of $\linegraph{}{L}{c,n}$.
		
		From \eqref{c:connected} we get that $\connecteder{c}$ is the union of the graphs of the partial maps and their inverses of the following form:
		\[f_{n,n',k,k',t,t'}(n',k',t' \concatt r)=(n,k,t \concatt r),\]
		where $n' \geq n$, $\absolutevalue{t}-\absolutevalue{t'}=n'-n$.
		It is clear that the above partial maps are injective and preserve category. Thus, \[[B]_{\connecteder{c}} =\bigcup_{n,n',k,k',t,t'} f^{\pm 1}_{n,n',k,k',t,t'} (B)\] is also meager and Borel. 
		
		To see \eqref{c:category} first note that \eqref{c:regularity} implies $\Borelchromatic{\restriction{\linegraph{b}{L}{c}}{[B]_{\connecteder{c}}}}\leq 3$ using the standard fact that the maximal vertex degree $+1$ is an upper bound (see, e.g., \cite{kechris1999borel}).
		
		Assume that $B$ is a non-meager Borel set and that $c \from [B]_{\connecteder{c}} \to 2$ is a Borel $2$-coloring of $\restriction{\linegraph{b}{L}{c}}{[B]_{\connecteder{c}}}$. Then there exists an $i<2$ and a basic open set of the form $[(n,k,t)](=\{(n,k,r) \in \underlyingspace{b}{X}{c} \suchthat r \sqsupset t\})$ with $n,k \in \N, t \in 2^{<\N}$, such that $[(n,k,t)] \setminus (B \cap c^{-1}(i))$ is meager. Using \eqref{c:meagersaturation} we have that $[[(n,k,t)] \setminus (B \cap c^{-1}(i))]_{\connecteder{c}}$ is also meager, so we can pick a point $(n,k,r) \in [(n,k,t)] \cap B \cap c^{-1}(i)$ that does not belong to this meager set. Assume that $r=t\concatt (\varepsilon) \concatt r'$, then by \ref{c:connected} we have that $(n,k,t\concatt (1-\varepsilon) \concatt r') \in [(n,k,t)] \cap B \cap c^{-1}(i)$ holds, in particular $c(n,k,t\concatt (\varepsilon) \concatt r')=c(n,k,t\concatt (1-\varepsilon) \concatt r')=i$. As in the proof of \eqref{c:connected}, it follows that \[\distance{(n,k,t\concatt (\varepsilon) \concatt r') }{(n,k,t\concatt (1-\varepsilon) \concatt r')}{\linegraph{b}{L}{c}}=\]
		\[\distance{\pi_{c,n+\absolutevalue{t}+1}(n,k,t\concatt (\varepsilon) \concatt r') }{\pi_{c,n+\absolutevalue{t}+1}(n,k,t\concatt (1-\varepsilon) \concatt r')}{\linegraph{}{L}{c,n+\absolutevalue{t}+1}}=\]
		\[\distance{(k)\concatt t \concatt (\varepsilon)) }{(k) \concatt t \concatt (1-\varepsilon)}{\linegraph{}{L}{c,n+\absolutevalue{t}+1}},\] which is an odd number by \eqref{c:oddistance}. This contradicts the assumption that $c$ was a Borel $2$-coloring of $\restriction{\linegraph{b}{L}{B}}{[(n,k,r)]_{\connecteder{c}}}\subseteq \restriction{\linegraph{b}{L}{B}}{[B]_{\connecteder{c}}}$.

		Finally, for \eqref{c:largegap}, it is easy to verify that $B=\{(0,0,r) \in \underlyingspace{b}{X}{c} \suchthat r \in 2^\N\}$ witnesses the large gap property of $\linegraph{b}{L}{c}$, whenever $\limsup_n c(n)=\infty$.
		
		
		
	\end{proof}

	\begin{claim}
		\label{cl:basiclines}
		Assume that $\linegraph{}{L}{}, \linegraph{}{L}{}'$ are $\leq 2$-regular acyclic Borel graphs on standard Borel spaces $\underlyingspace{}{X}{}$, $\underlyingspace{}{X}{}'$.
		\begin{enumerate}
			
			\item \label{c:smooth} Let $A$ be an $L$-invariant analytic set so that $\restriction{\connecteder{\linegraph{}{L}{}}}{A}$ is smooth. Then $\Borelchromatic{\restriction{\linegraph{}{L}{}}{A}} \leq 2$.
			\item \label{c:ontohomo}
			Assume that $\phi$ is a Borel homomorphism from  $\linegraph{}{L}{}$ to $\linegraph{}{L}{}'$. Define $M=\{x \in \underlyingspace{}{X}{} \suchthat \phi \text{ mapping }[x]_{E_{\linegraph{}{L}{}}} \to [\phi(x)]_{E_{\linegraph{}{L}{}'}} \text{ is not onto}\}.$ Then $M$ is Borel and $\Borelchromatic{\restriction{\linegraph{}{L}{}}{M}} \leq 2$.  
		\end{enumerate}
	\end{claim}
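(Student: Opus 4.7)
For part (\ref{c:smooth}), I plan to exploit smoothness of $\restriction{\connecteder{L}}{A}$ to produce a Borel, $\connecteder{L}$-invariant selector $s \from A \to A$ with $s(x) \in [x]_{\connecteder{L}}$ for all $x \in A$; since $\connecteder{L}$ is a countable Borel equivalence relation on $X$ (because $L$ has vertex degree at most two), such a selector is obtainable from the smooth reduction via Luzin--Novikov uniformization together with a Borel linear order on $X$. The coloring is then $c(x) = \distance{x}{s(x)}{L} \bmod 2$, which is Borel. For $L$-adjacent $x, y$ one has $s(x) = s(y)$ by invariance, and the standard tree fact---adjacent vertices in an acyclic graph are at distances from any common vertex differing by exactly one---gives $c(x) \neq c(y)$.

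For part (\ref{c:ontohomo}), I first verify that $M$ is Borel. Writing $\connecteder{L} = \bigcup_n \graph(f_n)$ and $\connecteder{L'} = \bigcup_m \graph(g_m)$ via Luzin--Novikov (both are countable Borel by the degree bound), one sees that $x \in M$ if and only if $\exists m \, \forall n \, g_m(\phi(x)) \neq \phi(f_n(x))$, which is a Borel condition.

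To bound the Borel chromatic number, the plan is to $\connecteder{L}$-invariantly select a ``boundary vertex'' of the sub-path $\phi([x]_{\connecteder{L}})$ inside the ambient path $[\phi(x)]_{\connecteder{L'}}$. For $x \in M$, set
\[
V(x) = \set{w \in \phi([x]_{\connecteder{L}})}[w \text{ has an } L'\text{-neighbor outside } \phi([x]_{\connecteder{L}})].
\]
Since $\phi([x]_{\connecteder{L}})$ is a proper $L'$-connected subgraph of the path $[\phi(x)]_{\connecteder{L'}}$, one has $1 \leq \absolutevalue{V(x)} \leq 2$, and $V(x)$ depends only on the class $[x]_{\connecteder{L}}$. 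Fixing a Borel linear order on $X'$ and setting $v(x) = \min V(x)$ yields a Borel, $\connecteder{L}$-invariant function (Borelness by another Luzin--Novikov uniformization of the finite-valued relation $V$). Defining $c(x) = \distance{\phi(x)}{v(x)}{L'} \bmod 2$ then gives the required Borel coloring: for $L$-adjacent $x, y$ in $M$, irreflexivity of $L'$ ensures that $\phi(x), \phi(y)$ are distinct $L'$-neighbors, invariance gives $v(x) = v(y)$, and acyclicity of $L'$ forces $\absolutevalue{\distance{\phi(x)}{v(x)}{L'} - \distance{\phi(y)}{v(x)}{L'}} = 1$. The main subtlety---producing an $\connecteder{L}$-invariant Borel selector into the target space---is resolved by the observation that the boundary set $V(x)$ is itself intrinsically class-invariant and has at most two elements, so any Borel tie-breaking rule on $X'$ suffices.
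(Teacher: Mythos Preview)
Your proposal is correct and matches the paper's approach: in both parts one selects an $E_L$-invariant reference point (a transversal element for (1), a boundary vertex of $\phi([x]_{E_L})$ for (2)) and colors by the parity of the $L$- or $L'$-distance to it. The only cosmetic differences are that the paper's boundary vertex in (2) is taken just \emph{outside} rather than inside the image, and that for (1) the paper invokes analytic separation at the end (since $A$ is merely analytic) where you assert Borelness directly.
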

	
	\begin{proof}
		
		In order to see \eqref{c:smooth} note that $\connecteder{\linegraph{}{L}{}}$ is countable, so smoothness is equivalent to the existence of an $\linegraph{}{L}{}$-invariant Borel partial mapping $x \to y_x$ so that $y_x \connecteder{\linegraph{}{L}{}}x $ holds, for every $x \in A$. Clearly, the map $c \from A \to 2$ defined by $c(x)=0$ iff $\distance{x}{y_x}{\linegraph{}{L}{}}$ is even, is a $2$-coloring of the graph $\restriction{\linegraph{}{L}{}}{A}$, such that for $i<2$ the sets $c^{-1}(i)$ are analytic. Using the analytic separation this yields that $\Borelchromatic{\restriction{\linegraph{}{L}{}}{A}} \leq 2$.

		For \eqref{c:ontohomo}, fix a Borel linear ordering $<$ on $X$. Since $\restriction{\linegraph{0}{L'}{}}{\phi([x]_{E_{\linegraph{}{L}{}}})}$ and $\restriction{\linegraph{0}{L'}{}}{[\phi(x)]_{E_{\linegraph{}{L'}{}}}}$ are $\leq 2$-regular acyclic connected graphs, there are one or two vertices in $[\phi(x)]_{E_{\linegraph{}{L'}{}}} \setminus \phi([x]_{E_{\linegraph{}{L}{}}})$ which have an $\linegraph{0}{L'}{}$-neighbor in $\phi([x]_{E_{\linegraph{}{L}{}}})$, let $y_x$ be the $<$-minimal such vertex. Now, similarly to $\eqref{c:smooth}$, letting $c(x)=0$ iff $\distance{\phi(x)}{y_x}{\linegraph{}{L'}{}}$ is even, shows that $\Borelchromatic{\restriction{\linegraph{}{L}{}}{M}} \leq 2$.	\end{proof}
	
	The following claim will be used to establish Theorem \ref{t:main_intro} for Borel graphs. 
	\begin{claim}
		\label{c:bhom}
		Assume that $G$ is a Borel graph on a standard Borel space $X$, $c \in (2\N+1)^{\N}$ and $(\phi_n)_{n \in\N}$ is a a sequence of Borel partial maps from $X$ to $\underlyingspace{}{X}{c,n}$ with the following properties for every $n\in \N$:
		\begin{enumerate}
			
			\item \label{p:union} $\dom(\phi_n) \subseteq \dom(\phi_{n+1})$ and $\bigcup_{n \in \N} \dom(\phi_n)=X$.
			\item \label{p:homomorphism} the map $\phi_n$ is a partial homomorphism from $G$ to $\linegraph{}{L}{c,n}$. 
			\item \label{p:compatible} $\phi_n=\restriction{\pi_{c,n,n+1}\circ \phi_{n+1}}{\dom(\phi_n)}.$ 
			
		\end{enumerate}	
		Then there exists a Borel homomorphism $\phi$ from $G$ to $\linegraph{b}{L}{c}$. 
		
		Moreover, 
		\begin{enumerate}
			\myitem[(4)] \label{p:continuity} if $X$ is Polish, for every $n \in \N$ the set $\dom(\phi_n)$ is open in $X$, and the maps $\phi_n$ are continuous, 
		\end{enumerate}
		then $\phi$ can be chosen to be continuous.
	\end{claim}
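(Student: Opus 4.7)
The plan is to construct $\phi$ as the coherent limit of the $\phi_n$. For each $x \in X$, let $n_x$ be the least $n \in \N$ with $x \in \dom(\phi_n)$, which is a Borel function by \eqref{p:union}. Writing $\phi_{n_x}(x) = (k_x) \concatt t_x$ where $m_x \leq n_x$ is the level of $\phi_{n_x}(x)$ in $\underlyingspace{}{X}{c,n_x}$, $k_x \leq c(m_x)$, and $t_x \in 2^{n_x - m_x}$, property \eqref{p:compatible} yields, by induction on $n$, that $\phi_n(x) = (k_x) \concatt t_x \concatt s^x_n$ for every $n \geq n_x$, with $s^x_n \in 2^{n - n_x}$ and $s^x_{n+1} \sqsupset s^x_n$. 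Set $r_x = t_x \concatt \bigcup_n s^x_n \in 2^\N$ and define
\[
    \phi(x) = (m_x, k_x, r_x) \in \underlyingspace{b}{X}{c}.
\]
By construction, $\pi_{c,n}(\phi(x)) = \phi_n(x)$ for every $n \geq n_x$, and this is the unique element of $\underlyingspace{b}{X}{c}$ with this property.

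To verify that $\phi$ is a homomorphism from $G$ to $\linegraph{b}{L}{c}$, fix $(x,y) \in G$ and $n \geq \max(m_x, m_y)$; we must show $(\pi_{c,n}(\phi(x)), \pi_{c,n}(\phi(y))) \in \linegraph{}{L}{c,n}$. When $n \geq \max(n_x, n_y)$, both $x,y \in \dom(\phi_n)$, so this follows from \eqref{p:homomorphism}. Otherwise set $N = \max(n_x, n_y)$, obtain the edge $(\phi_N(x), \phi_N(y)) \in \linegraph{}{L}{c,N}$ from \eqref{p:homomorphism}, and descend by iterating $\pi_{c,k-1,k}$ for $k = N, N-1, \ldots, n+1$. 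The main subtlety, and the step I expect to require the most care, is that these projections genuinely preserve edges at every step: both $\pi_{c,k}(\phi(x))$ and $\pi_{c,k}(\phi(y))$ lie in $\dom(\pi_{c,k-1,k})$ precisely because $m_x, m_y \leq k - 1$, and the recursive definition of $\linegraph{}{L}{c,k}$ as two copies of $\linegraph{}{L}{c,k-1}$ joined to $L_{c(k)}$ by two bridging edges implies that any edge of $\linegraph{}{L}{c,k}$ with both endpoints in $\dom(\pi_{c,k-1,k})$ must be a copied edge, and therefore projects to an edge of $\linegraph{}{L}{c,k-1}$; the remaining edges of $\linegraph{}{L}{c,k}$ — those internal to $L_{c(k)}$ or bridging — all have at least one endpoint in $X_k \setminus \dom(\pi_{c,k-1,k})$, hence cannot arise.

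Borel measurability of $\phi$ is then immediate, since $n_x$, $m_x$, $k_x$, and each coordinate of $r_x$ are Borel functions of $x$. For the moreover clause, the crucial observation is that each $\underlyingspace{}{X}{c,n}$ is finite, hence discrete; so continuity of $\phi_n$ on the open set $\dom(\phi_n)$ implies that $\phi_n^{-1}(\phi_n(x))$ is an open neighborhood of every $x \in \dom(\phi_n)$. Given a basic open neighborhood $V$ of $\phi(x)$ in $\underlyingspace{b}{X}{c}$ (specifying the first two coordinates and the first $N$ bits of the third), choose $M \geq n_x$ with $M - m_x \geq N$; then on $\phi_M^{-1}(\phi_M(x))$ the equality $\phi_M(y) = \phi_M(x)$ forces $m_y = m_x$, $k_y = k_x$, and agreement of the first $M - m_x \geq N$ bits of $r_y$ with those of $r_x$, so $\phi$ maps this open neighborhood of $x$ into $V$.
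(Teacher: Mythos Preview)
Your proof is correct and follows essentially the same approach as the paper's: define $\phi(x)$ as the coherent limit of the $\phi_n(x)$, verify $\pi_{c,n}(\phi(x)) = \phi_n(x)$ for $n \ge n_x$, and check the homomorphism and continuity conditions. Your version is in fact more careful than the paper's in one respect: the paper only explicitly verifies the edge condition for $n \ge \max(n_x, n_y)$ and then invokes the definition of $\mathbb{L}_c$ directly, whereas you correctly note that the definition demands it for all $n \ge \max(m_x, m_y)$ and supply the edge-projection argument (edges of $L_{c,k}$ with both endpoints outside the new level-$k$ path are copied edges and hence project down) to cover the remaining levels.
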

	
	\begin{proof}
		
		Let $x \in X$ be arbitrary and take $n^x_0$ to be minimal such that $x \in \dom(\phi_{n_0})$. For $n \geq n_0$ we have that $\phi_n(x)=(k_n) \concatt t_n$ for some $t_n \in 2^{n-m_n}$ and $k_n ,m_n\in \N$. By \eqref{p:compatible} for every $n\geq n_0$ the relations $k_n=k_{n+1}$, $m_n=m_{n+1}$, and $t_n \sqsubset t_{n+1}$ hold. Let $\phi(x)=(m_{n_0},k_{n_0},\bigcup_{n \geq n_0} t_n)$. Clearly, $\phi$ is a Borel map, we check that it is a homomorphism. Indeed, if $(x_i)_{i<2} \in G$ then by \eqref{p:homomorphism} letting $n \geq \max\{n^{x_i}_0 \suchthat i<2\}$ we have that $(\phi_n(x_i))_{i<2} \in \linegraph{}{L}{c,n}$. Notice that $\pi_{c,n}(\phi(x))=\phi_n(x)$, whenever $x \in \dom(\phi_n)$, so we obtain $(\pi_{c,n}(\phi(x_i)))_{i<2}=(\phi_n(x_i))_{i<2} \in \linegraph{}{L}{c,n}$, which verifies our claim by the definition of $\linegraph{b}{L}{c}$.
		
		Finally, one can easily check that the assumptions of \ref{p:continuity} of the claim yield the continuity of $\phi$.
	\end{proof}

	
	
	
	
	
	\section{A basis under continuous reducibility}
	\label{s:basis}
	In this section we construct a basis for Borel digraphs with Borel chromatic number $>2$. We will show these results in a somewhat greater generality than stated in the introduction, namely for analytic graphs defined on Hausdorff spaces. The proof relies on a slight modification of the idea behind the $\mathbb{G}_0$-dichotomy together with an observation about the Borel $2$-colorability of Borel digraphs, which is essentially summarized in Claims \ref{cl:terminal1}, \ref{cl:terminal_coloring}, and \ref{cl:coloring2} below.
	
	\begin{theorem}
		\label{t:g0style} Let $\digraph{}{G}{}$ be an analytic digraph on a Hausdorff space $X$. Then exactly one of the following holds:
		
		\begin{enumerate}
			\item $\Borelchromatic{\digraph{}{G}{}} \leq 2$.
			\item There exists an \oddpair{\aleph_0} $b$ so that $\directed{b}$ admits a continuous homomorphism to $\digraph{}{G}{}$. Moreover, for any $f \in \N^\N$ the pair $b=(c,d)$ can be chosen in such a way, so that for every $i \in \N$ we have $\Sigma(d(i))>f(i) \cdot \sum_{j<i} \absolutevalue{\Sigma(d(j))}$.
		\end{enumerate}
		
	\end{theorem}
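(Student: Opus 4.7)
The mutual exclusivity of (1) and (2) is straightforward. If there is a continuous homomorphism $\phi \from \underlyingspace{b}{X}{c} \to X$ from $\directed{b}$ to $\digraph{}{G}{}$, then by Claim~\ref{c:basic}\eqref{c:symmetry} the same $\phi$ is a continuous homomorphism from $\linegraph{b}{L}{c} = \directed{b} \cup \directed{b}^{-1}$ to $\digraph{}{G}{} \cup \digraph{}{G}{}^{-1}$. Since the Borel chromatic number of a digraph equals that of its symmetrization, and since $\underlyingspace{b}{X}{c}$ is itself a non-meager Borel subset of $\underlyingspace{b}{X}{c}$, Claim~\ref{c:basic}\eqref{c:category} gives $\Borelchromatic{\digraph{}{G}{}} \geq \Borelchromatic{\linegraph{b}{L}{c}} = 3$, so (1) fails.

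For the main implication (not (1) $\Rightarrow$ (2)), the plan is to follow the blueprint of the \Kechris-\Solecki-\Todorcevic $\mathbb{G}_0$-dichotomy, modified to control orientations and directed lengths. First, I would refine the topology on $X$ to reduce to the case where $X$ is \Polish and $\digraph{}{G}{}$ is $F_\sigma$. Then I would run a transfinite ``derivative'' process: starting from $Y_0 = X$, at each countable ordinal $\alpha$, set $Y_{\alpha+1}$ to be $Y_\alpha$ with the union of all open-in-$Y_\alpha$ Borel pieces on which $\restriction{\digraph{}{G}{}}{Y_\alpha}$ is Borel $2$-colorable removed (the relevant classes of $2$-colorable pieces will be those identified by Claims~\ref{cl:terminal1}, \ref{cl:terminal_coloring}, and \ref{cl:coloring2}), and at countable limits take intersections. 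Because the individual $2$-colorings can be assembled into a global one, the hypothesis $\Borelchromatic{\digraph{}{G}{}} \geq 3$ forces the process to stabilize at a non-empty $G_\delta$ \emph{kernel} $Y$ with the following \emph{richness property}: for every non-empty open $U \subseteq Y$, every odd $\ell \in \N$, and every $s \in \Z$ with $|s| \leq \ell$ and $s \equiv \ell \pmod 2$, there exists a $\digraph{}{G}{}$-path $p = (y_0, \ldots, y_\ell)$ in $U$ with $\Sigma(d_p) = s$.

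Given the richness of $Y$, I would construct $b = (c,d)$ together with a continuous homomorphism $\phi \from \underlyingspace{b}{X}{c} \to Y$ recursively in $n$. At stage $n$, having fixed $(c(j), d(j))_{j < n}$ together with a non-empty open set $U_v \subseteq Y$ and a distinguished point $y_v \in U_v$ for each vertex $v$ of $\digraph{}{L}{b,n-1}$ so that $v \mapsto y_v$ is a homomorphism from $\digraph{}{L}{b,n-1}$ to $\digraph{}{G}{}$ and so that the $U_v$'s agree with the intended basic open neighborhoods in $\underlyingspace{b}{X}{c}$, I choose an odd $c(n)$ large enough that $c(n) + 2 > f(n) \cdot \sum_{j < n} |\Sigma(d(j))|$ and set $d(n) = (1)^{c(n)+2}$, so that $\Sigma(d(n)) = c(n) + 2$ automatically satisfies the growth requirement. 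Then, using the richness of $Y$ inside small neighborhoods of the two copies of the current configuration, I would find a $\digraph{}{L}{c(n)}^{d(n)}$-path realizing the ``bridge'' needed in the construction of $\digraph{}{L}{b,n}$, shrink the open sets so that their diameters tend to zero, and declare the new distinguished points as the endpoints found. Taking the pointwise limit produces a continuous $\phi$, and since $\mathbb{L}_b$-edges are detected at some finite stage (as edges of $\digraph{}{L}{b,m}$ for large $m$), $\phi$ is a homomorphism.

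\textbf{Main obstacle.} The key difficulty lies in verifying the strengthened richness property on the kernel. In the original $\mathbb{G}_0$-dichotomy, it suffices that every non-empty open subset of the kernel contain a single $\digraph{}{G}{}$-edge; here we additionally need paths of prescribed arbitrary signed length, and we need to choose these paths so that the recursive construction can continue to shrink the open sets while maintaining homomorphism. Calibrating the right notion of ``$2$-colorable piece'' to remove in the derivative argument is therefore crucial, and this is exactly where Claims~\ref{cl:terminal1}, \ref{cl:terminal_coloring}, and \ref{cl:coloring2} enter: their contrapositives on the kernel should deliver the directed-path richness we need. Once the richness is in hand, the growth control on $\Sigma(d(n))$ is essentially free, since we are free to make $c(n)$ grow arbitrarily quickly and take $d(n)$ constantly $+1$.
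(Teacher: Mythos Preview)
Your overall architecture is right—the argument is a $\mathbb{G}_0$-style derivative, and Claims~\ref{cl:terminal1}, \ref{cl:terminal_coloring}, and \ref{cl:coloring2} are indeed the engine—but two points need correction.

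\textbf{The richness you claim is too strong.} The contrapositive of Claim~\ref{cl:coloring2} applied to a set $A$ gives, for each bound $n$, \emph{some} odd-length $G$-path between points of $A$ whose signed length exceeds $n$. It does not let you prescribe the pair $(\ell,s)$, and in particular it does not produce a purely forward-directed path. Hence you cannot set $d(n)=(1)^{c(n)+2}$; there is no reason such a path exists (take, say, $G=\digraph{b}{L}{b'}$ for $b'=(c',d')$ where each $d'(i)$ alternates signs often enough that long directed paths are absent, yet $\Sigma(d'(i))$ is still large and odd). The remedy, and what the paper does, is to let the path you find dictate both $c(n)$ (its undirected length minus two) and $d(n)=d_p$ (its sign sequence); the growth inequality $\Sigma(d(n))>f(n)\sum_{j<n}|\Sigma(d(j))|$ is then secured simply by demanding the signed length of the path to exceed that quantity.

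\textbf{The duplication step is not justified by an open-set derivative.} After you find a long path with endpoints $x_0,x_1\in U_{s_{n-1}}$, you need two \emph{full} homomorphic copies of $\digraph{}{L}{b,n-1}$ into $G$ sending $s_{n-1}$ to $x_0$ and to $x_1$ respectively; knowing only that $x_0,x_1$ lie in a prescribed open set does not supply these. The paper avoids this by running the derivative not on open sets but on \emph{approximations} $a=(\aStage{a},\aCode{a},\aMapV{a},\aMapE{a})$, which record the entire finite combinatorics together with basic-open-set codes for every vertex and edge already placed. Claim~\ref{cl:coloring2} is applied to the analytic set $A(a,Y)=\{\phi_X\circ\aMapV{\gamma}(s_{\aStage{a}}):\gamma\text{ compatible with }a\text{ and }Y\}$, so that a long path between two elements of $A(a,Y)$ automatically comes equipped with two configurations $\gamma_0,\gamma_1$ compatible with $a$, furnishing the two copies at once. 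Your open-set kernel does not carry this information, and rebuilding the previous stages around $x_0$ and $x_1$ by a separate density argument is not addressed in your proposal.
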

	\begin{proof}
		The proof will follow the proof of the $\mathbb{G}_0$-dichotomy presented in \cite{millernote}.
		
		Fix a function $f \in \N^\N$. As $\digraph{}{G}{}$ is analytic, there exist a continuous surjection $\phi_{\digraph{}{G}{}} \from \N^\N \to \digraph{}{G}{}$ and a continuous map $\phi_X \from \N^\N \to X$ such that $\phi_X(\N^\N)$ is the union of the two projections of $\digraph{}{G}{}$ to $X$. By iteratively throwing away $\digraph{}{G}{}$-invariant sets restricted to which $\digraph{}{G}{}$ has a Borel $2$-coloring we define a decreasing sequence $(X^\alpha)_{\alpha<\omega_1}$ of analytic subsets of $X$. Let $X^0=\phi_X(\N^\N)$ and $X^\lambda=\bigcap_{\alpha<\lambda} X^{\alpha}$ if $\lambda$ is a limit ordinal.
		
		Let us now describe the successor stage.

		An \definedterm{approximation} is a quadruple  $a=(\aStage{a},\aCode{a},\aMapV{a},\aMapE{a})$, where $\aStage{a} \in \N, \aCode{a}=(\aCodeB{a},\aCodeD{a})$ is an \oddpair{\aStage{a}}, $\aMapV{a} \from \underlyingspace{}{X}{\aBoundStage{a}} \to \N^{\aStage{a}}$, and $\aMapE{a} \from \digraph{}{L}{\aCodeStage{a}} \to \N^{\aStage{a}}$. An approximation $a'$ said to \definedterm{one-step extend} $a$, if 
		\begin{enumerate}[label=(\alph*)]
			\item $\aStage{a'}=\aStage{a}+1$.
			\item $\aCodeB{a'} \sqsupset \aCodeB{a},\aCodeD{a'} \sqsupset \aCodeD{a}$.
			\item \label{con:directed_distance}
			$\Sigma(\aCodeD{a'}(\aStage{a'}))>f(\aStage{a}) \cdot \sum_{j \leq \aStage{a}} \absolutevalue{\Sigma(\aCodeD{a}(j))}$.
			\item \label{con:extendvertex} $\forall x \in \dom(\pi_{\aDifferent{a}{a'}}) \ \aMapV{a'}(x) \sqsupset \aMapV{a}\circ \pi_{\aCodeB{a'},\aStage{a},\aStage{a'}}(x)$.
			\item \label{con:extendedge} $\forall x,y \in \dom(\pi_{\aCodeB{a'},\aStage{a},\aStage{a'}})$ \[(x,y) \in \digraph{}{L}{\aCodeStage{a'}} \implies \aMapE{a'}(x,y) \sqsupset \aMapE{a}(\pi_{\aDifferent{a}{a'}}(x),\pi_{\aDifferent{a}{a'}}(y)).\]
		\end{enumerate}
		A \definedterm{configuration} is a quadruple of the form $\gamma=(\aStage{\gamma},\aCode{\gamma},\aMapV{\gamma},\aMapE{\gamma})$, where $\aStage{\gamma} \in \N$, $\aCode{\gamma}$ is an \oddpair{\aStage{\gamma}}, $\aMapV{\gamma} \from \underlyingspace{}{X}{\aBoundStage{\gamma}} \to \N^\N$, and $\aMapE{\gamma} \from \digraph{}{L}{\aCodeStage{\gamma}} \to \N^\N$ having the following property: for every $(x,y) \in \digraph{}{L}{\aCodeStage{\gamma}}$ 
		\begin{equation}
		\label{e:approxhom}
		(\phi_{\digraph{}{G}{}} \circ \aMapE{\gamma})(x,y)=(\phi_X \circ \aMapV{\gamma}(x),\phi_X \circ \aMapV{\gamma}(y)). \end{equation}
		
		A configuration $\gamma$ is said to be \definedterm{compatible} with an approximation $a$, if 
		
		\begin{enumerate}
			\item $\aStage{a}=\aStage{\gamma}$.
			\item $\aCode{a}=\aCode{\gamma}$.
			\item $\forall x \in \underlyingspace{}{X}{\aBoundStage{\gamma}} \ \aMapV{a}(x) \sqsubset\aMapV{\gamma}(x)$.
			\item $\forall (x,y) \in \digraph{}{L}{b^\gamma,n^\gamma} \ \aMapE{a}(x,y) \sqsubset\aMapE{\gamma}(x,y)$.
		\end{enumerate}
		We say that a configuration $\gamma$ is \definedterm{compatible} with a set $Y \subseteq X$, if $\phi_X \circ \aMapV{\gamma}(\underlyingspace{}{X}{\aBoundStage{\gamma}}) \subseteq [Y]_{\connecteder{\digraph{}{G}{}}}$. An approximation $a$ is \definedterm{$Y$-terminal} if no configuration is compatible with both $Y$ and a one step extension of $a$. Let \[\imageConf{a}{Y}=\{\phi_X \circ \aMapV{\gamma}(s_{\aStage{\gamma}}) \suchthat \gamma \text{ is compatible with } a \text{ and } Y\}.\] 
		\begin{lemma}
			\label{l:terminal}
			Suppose that $Y\subseteq X$ is an analytic set and $a$ is a $Y$-terminal approximation. Then there exists an $\digraph{}{G}{}$-invariant Borel set $\BorelInvariant{a}{Y} \supseteq [\imageConf{a}{Y}]_{\connecteder{\digraph{}{G}{}}}$ so that $\restriction{\digraph{}{G}{}}{\BorelInvariant{a}{Y}}$ has a Borel $2$-coloring, $\twocoloring{a}{Y}$.
		\end{lemma}
		We start with a series of claims. 
		
		\begin{claim}
			\label{cl:terminal1} Suppose that $A \subseteq X$ is an analytic set such that for every $x,y \in A$, every $\digraph{}{G}{}$-path from $x$ to $y$ has even (undirected or, equivalently, directed) length.
			Then there exists a $\digraph{}{G}{}$-invariant Borel set $B \supseteq [A]_{E_{\digraph{}{G}{}}}$ on which $\restriction{\digraph{}{G}{}}{B}$ admits a Borel $2$-coloring.
		\end{claim}
		\begin{proof}
			For $i <2$, define the $A_i \subseteq [A]_{E_{\digraph{}{G}{}}}$ as follows: let $x \in A_i$ if there exists a path of length $n$ from $x$ to some $y \in A$, where $n \equiv i \mod 2$. It is clear that the sets $(A_i)_{i<2}$ are analytic, their union covers $[A]_{E_G}$, and they are $\digraph{}{G}{}$-independent.  Note that their $\digraph{}{G}{}$-independence implies that $A_0 \cap A_1= \emptyset$. By the analytic separation theorem there exist Borel sets $B_i \supseteq A_i$ with $B_0 \cap B_1=\emptyset$. Define $c(x)=i \iff x \in B_i$ and let $C=\{x\in X \suchthat c \text{ is a $2$-coloring of } \restriction{\digraph{}{G}{}}{[x]_{\connecteder{\digraph{}{G}{}}}}\}$. Clearly, the sets $X \setminus C$ and $A_0 \cup A_1$ are disjoint, analytic, and  $\digraph{}{G}{}$-invariant. Hence, by \cite[Lemma 5.1]{harringtonkechris} there exists an $\digraph{}{G}{}$-invariant Borel set $B \supseteq A_0 \cup A_1$, with $B \cap (X \setminus C) = 
			\emptyset$. Then, $\restriction{c}{B}$ is a Borel $2$-coloring of $\restriction{\digraph{}{G}{}}{B}$. 
		\end{proof}
		\begin{Claim}
			\label{cl:terminal_coloring} 
			Let $A' \subseteq A \subseteq X$ be analytic sets and $d \in \Z$ be an odd number.
			Assume that for every $x' \in A'$ there exists an $x \in A$, such that there exists a $\digraph{}{G}{}$-path from $x'$ to $x$ of directed length $d$. Moreover, assume that every odd length $\digraph{}{G}{}$-path between elements of $A$ has directed length $\leq \absolutevalue{d}$. Then there exists an $\digraph{}{G}{}$-invariant Borel set $B \supseteq [A']_{\connecteder{\digraph{}{G}{}}}$ on which $\restriction{\digraph{}{G}{}}{B}$ admits a Borel $2$-coloring.
		\end{Claim}
		\begin{proof}
			Suppose that $d>0$, the other case is analogous. Let $A'_0=\{x' \in A' \suchthat$ there exists a $\digraph{}{G}{}$-path from $x'$ to some element of $A$ with odd negative directed length$\}$. We claim that $A'_0$ satisfies the assumptions of Claim \ref{cl:terminal1}. Assume that it is not the case, i.e., there exists $x',y' \in A'_0$ so that there exists a $\digraph{}{G}{}$-path of odd length between $x'$ and $y'$. As the directed length of odd length path is non-zero, we can assume (switching the roles of $x'$ and $y'$ if necessary) that there exists a path $p$ from $x'$ to $y'$ of positive odd directed length.  Then, using our assumptions on $A'_0$ and $A'$ there exist $z,w\in A$ and $\digraph{}{G}{}$-paths $q,r$, such that $q$ is a path from $z$ to $x'$, $r$ is a path from $y'$ to $w$ and $\dlength(q)>0$, $\dlength(r)=d$ and both of these numbers are odd. But then the path $q \concatt p \concatt r$ (i.e., the path $(q(0)\concatt p(0) \concatt r(0),d_q\concatt d_p \concatt d_r)$) connects $z$ with $w$ and $\dlength(q \concatt p \concatt r)>d+\dlength(p)$, and the former is an odd number $>d$, contradicting our assumption on $A$. 
			
			Now let $B_0$ be the invariant Borel superset of $[A'_0]_{\connecteder{\digraph{}{G}{}}}$ provided by Claim \ref{cl:terminal1} and define $A'_1=A' \setminus B_0$. Clearly, by the definition of $A'_0$ and as $A'_1 \subseteq A$, the set $A'_1$ also satisfies the requirements of Claim \ref{cl:terminal1}, so let $B_1 \supseteq [A'_1]_{\connecteder{\digraph{}{G}{}}}$ be the Borel set guaranteed. Then, it is easy to see from the invariance of $B_0$ and $B_1$ that $B=B_0 \cup B_1$ satisfies the requirements of the Claim. 
		\end{proof}
		
		\begin{Claim}
			\label{cl:coloring2} Let $A \subseteq X$ be an analytic set, and assume that there exists an $n \in \N$ such that whenever $x,y \in A$ and $p$ is a $\digraph{}{G}{}$-path of odd length from $x$ to $y$ then $\dlength(p) \leq n$. Then there exists an $\digraph{}{G}{}$-invariant Borel set $B \supseteq [A]_{\connecteder{\digraph{}{G}{}}}$ such that $\restriction{\digraph{}{G}{}}{B}$ admits a Borel $2$-coloring.  
		\end{Claim}
		\begin{proof}
			We prove this statement by induction on the minimal $n$ with this property. If $n=0$, then Claim \ref{cl:terminal1} yields the required conclusion.
			
			Now assume that we have shown the statement for every number $\leq n-1$. If $n>0$ is even, then it cannot be minimal, hence there is nothing to show. So we can assume that $n$ is odd. For $\varepsilon \in \{-1,1\}$ let  $A_{n,\varepsilon}=\{x \in A:$ there exists a $\digraph{}{G}{}$-path from $x$ to some $y \in A$ of directed length $\varepsilon \cdot n\}$. Now, we can apply Claim \ref{cl:terminal_coloring} to the sets $A_{n,\varepsilon}$, $A$ and $\varepsilon \cdot n$. This yields $\digraph{}{G}{}$-invariant Borel sets $B_{\varepsilon} \supseteq A_{n,\varepsilon}$ on which $\digraph{}{G}{}$ admits a Borel $2$-coloring. Note that if $x \in A \setminus (B_{-1}\cup B_1)$ then every odd length path between $x$ and an element of $A$ must have directed length $<n$. So, by the inductive hypothesis, we can find an invariant Borel set $B^{n-1} \supseteq [A \setminus (B_{-1}\cup B_{1})]_{\connecteder{\digraph{}{G}{}}}$, such that $\restriction{\digraph{}{G}{}}{B}$ admits a Borel $2$-coloring. Using the invariance of $B^{n-1},B_{-1}$, and $B_{1}$ again, we obtain that $\restriction{\digraph{}{G}{}}{B^{n-1} \cup B_{-1}\cup B_{1}}$ also admits a Borel $2$-coloring, which finishes the proof.
		\end{proof}
		
		\begin{proof}[Proof of Lemma \ref{l:terminal}]
			
			By definition, the set $\imageConf{a}{Y}$ is analytic.
			If there exists an $n \in \N$ such that every path $p$ of odd length between vertices from $\imageConf{a}{Y}$ have directed length  $\leq n$ then Claim \ref{cl:coloring2} yields the $\digraph{}{G}{}$-invariant Borel set $\BorelInvariant{a}{Y} \supseteq [\imageConf{a}{Y}]_{\connecteder{\digraph{}{G}{}}}$, and a Borel $2$-coloring $\twocoloring{a}{Y}$ of $\restriction{\digraph{}{G}{}}{\BorelInvariant{a}{Y}}$.
			
			So, assume that such an $n$ does not exist, we will show that $a$ is not $Y$-terminal.  Using this assumption for $n=f(\aStage{a}) \cdot \sum_{j\leq\aStage{a}} \absolutevalue{\Sigma(\aCodeD{a}(j))}$ we obtain two  configurations $(\gamma_j)_{j<2}$ compatible with $a$ and $Y$, a $\digraph{}{G}{}$-path of odd length $p=((x_0,\dots,x_{m+2}),d_p)$ with $\dlength(p)>f(\aStage{a}) \cdot \sum_{j\leq\aStage{a}} \absolutevalue{\Sigma(\aCodeD{a}(j))}$ such that $x_0=(\phi_X \circ \aMapV{\gamma_0})(s_{\aStage{a}})$ and $x_l=(\phi_X \circ \aMapV{\gamma_1})(s_{\aStage{a}}).$ Pick $r_0,\dots,r_{m+2}\in \N^\N$ and  $e_0,\dots,e_{m+1} \in \N^\N$ so that
			\begin{itemize}
				\item $r_0=\aMapV{\gamma_0}(s_{\aStage{a}})$, $r_{m+2}=\aMapV{\gamma_1}(s_{\aStage{a}})$,
				
				\item $\forall j \leq m+2 \ \phi_X(r_j)=x_j$,
				\item  $\forall j<m+2 \ \phi_{\digraph{}{G}{}}(e_j)=(x_{j},x_{j+1})^{d_p(j)}$.
			\end{itemize}   
			
			We define a configuration $\delta$ as follows: let $\aStage{\delta}=\aStage{a}+1,\aCode{\delta}=(\aCodeB{\delta},\aCodeD{\delta})=( \aCodeB{a} \concatt m, \aCodeD{a} \concatt  d_p)$, and define $\aMapV{\delta}:\underlyingspace{}{X}{\aBoundStage{\delta}} \to \N^\N$ by
			\[
			\begin{cases}
			\aMapV{\delta}(x \concatt (j))=\aMapV{\gamma_j}(x),&\text{ for } x \in \underlyingspace{}{X}{\aBoundStage{a}},j<2.\\
			\aMapV{\delta}((j))=r_{j+1}, &\text{ for } j  \leq m.
			\end{cases}
			\]
			Finally, define $\aMapE{\delta}:\digraph{}{L}{\aCodeStage{\delta}} \to \N^\N$ by 
			\[
			\begin{cases}
			\aMapE{\delta}(x \concatt (j), y \concatt (j))=\aMapE{\gamma^j}(x,y),&\text{ for } (x,y) \in \digraph{}{L}{\aCodeStage{a}},j<2.\\
			\aMapE{\delta}((s_{\aStage{a}} \concatt (0),(0))^{d(0)})=e_{0}.\\
			\aMapE{\delta}(((m),s_{\aStage{a}} \concatt (1))^{d(m+1)})=e_{m+1}.\\

			\aMapE\delta((j,j+1)^{d(j+1)})=e_{j+1}, &\text{ for } j \leq m-1.
			\end{cases}
			\]
			
			It is not hard to check that $\delta$ is a configuration. Moreover, as $\gamma_0$ and $\gamma_1$ are compatible with $Y$, so is $\delta$. Finally, using the fact that $\Sigma(d)=\dlength(p)>f(\aStage{a}) \cdot \sum_{j\leq\aStage{a}} \absolutevalue{\Sigma(\aCodeD{a}(j))}$, one can deduce that there exists a unique one-step extension $a'$ of $a$, that is compatible with $\delta$. This contradicts the assumption that $a$ was $Y$ terminal.
\end{proof}

		Define \[X^{\alpha+1}=X^{\alpha} \setminus \bigcup_{a \text{ is $X^\alpha$ terminal, }} \BorelInvariant{a}{X^{\alpha}}.\] Since there are only countably many possible approximations, and $X^0$ is an analytic set, the sets $X^\alpha$ are analytic for every $\alpha<\omega_1$. Note also that each $X^\alpha$ is  $\digraph{}{G}{}$-invariant.
		
		\begin{lemma}
			\label{l:iterate}
			Assume that $\alpha<\omega_1$ and $a$ is an approximation that is not $X^{\alpha+1}$-terminal. Then $a$ has a one-step extension that is not $X^\alpha$-terminal. 
		\end{lemma}
		\begin{proof}
			Let $a'$ be a one-step extension of $a$ for which there exists a configuration $\gamma$ compatible with $X^{\alpha+1}$ and $a'$. Then $\emptyset \not =(\phi_X \circ \phi^\gamma)(\underlyingspace{}{X}{\aBoundStage{\gamma}}) \subseteq [X^{\alpha+1}]_{\connecteder{\digraph{}{G}{}}}=X^{\alpha+1}$, but if $a'$ was $X^{\alpha}$-terminal, then $[(\phi_X \circ \aMapV{\gamma})(\underlyingspace{}{X}{\aBoundStage{\gamma}})]_{\connecteder{\digraph{}{G}{}}} \subseteq [\imageConf{a'}{X^{\alpha}}]_{\connecteder{\digraph{}{G}{}}}$ would be covered by $\BorelInvariant{a'}{X^\alpha}$, contradicting the definition and the $\digraph{}{G}{}$-invariance of  $X^{\alpha+1}$.
		\end{proof}
		Note that the set of $X^\alpha$-terminal approximations increases as $\alpha$ increases, and there are only countably many approximations. Thus, we can fix an $\alpha<\omega_1$ so that the $X^{\alpha}$-terminal and $X^{\alpha+1}$-terminal approximations are the same. 
		
		\begin{lemma}
			\label{l:terminalalpha}
			If every approximation is $X^{\alpha+1}$-terminal, then $\digraph{}{G}{}$ has a Borel $2$-coloring.
		\end{lemma}
		\begin{proof}
			Observe first that $X^{\alpha+1}$ is $\digraph{}{G}{}$-independent: otherwise, if $(x,y) \in \digraph{}{G}{} \cap (X^{\alpha+1})^2$, then there exists a configuration $\gamma$ with $\aCodeB{\gamma}=(1)$ compatible with $\{x,y\}$. Consequently, there exists an approximation $a$ that is compatible with $\gamma$ and $X^{\alpha+1}$. Then, $a$ is $X^{\alpha+1}$-terminal, so $x,y \in [\imageConf{a}{X^{\alpha+1}}]_{\connecteder{\digraph{}{G}{}}}$, but then $a$ is an $X^{\alpha}$-terminal approximation as well, so $x,y \in [\imageConf{a}{X^{\alpha}}]_{\connecteder{\digraph{}{G}{}}} \subseteq \BorelInvariant{a}{X^\alpha}$, contradicting the definition of $X^{\alpha+1}$. 
			
			Moreover, $X^{\alpha+1} \subseteq X^0$ is $\digraph{}{G}{}$-independent and $\digraph{}{G}{}$-invariant, so by the definition of $X^0$ it must be empty.
			
			Let $e: \{(a,\beta):a \text{ is $X^\beta$ terminal}, \beta \leq \alpha\} \to \N$ be an injection and let $\twocoloring{a}{X^\beta} $ be the Borel $2$-coloring of $\restriction{\digraph{}{G}{}}{\BorelInvariant{a}{X^\beta}}$ given by Lemma \ref{l:terminal}, for $(a,\beta) \in \dom(e)$. If $x \in X$, define $c(x)=$ 
			\[\begin{cases}
			\twocoloring{a}{X^\beta}(x), &\text{ if } e(a,\beta) \text{ is minimal such that } x \in \BorelInvariant{a}{X^{\beta}}\\
			0, &\text{ if } x \not \in  \bigcup_{(a,\beta) \in \dom(e)} \BorelInvariant{a}{X^\beta}.
			
			\end{cases}
			\]
			It is easy to check that $c$ is a Borel map and it is defined on $X$, while the $\digraph{}{G}{}$-invariance of the sets $\BorelInvariant{a}{X^\beta}$ implies that $c$ is a $2$-coloring.
		\end{proof}
		
		Now we are ready to finish the proof of Theorem \ref{t:g0style}. Assume $\Borelchromatic{\digraph{}{G}{}}>2$. Then, by Lemma \ref{l:terminalalpha} there exists an approximation that is not $X^{\alpha+1}$-terminal. Clearly, we can find such an $a_0$ with $\aStage{a_0}=0$. By applying Lemma \ref{l:iterate} recursively, we obtain one-step extensions $a_{n+1}$ of $a_n$ which are not $X^\alpha$-terminal, with $\aStage{a_n}=n$. Define $\aCode{}=(\aCodeB{},\aCodeD{})=\bigcup_n \aCode{a_n}$, $\phi:\underlyingspace{b}{X}{\aCodeB{}} \to \N^\N$, and $\psi: \digraph{b}{L}{\aCode{}} \to \N^\N$ by letting $\phi(m,k,r)=\bigcup_{n \geq m}\aMapV{a_n}(\pi_{\aBoundStage{}}(m,k,r))$ and for $(m_i,k_i,r_i)_{i<2} \in \digraph{b}{L}{\aCode{}}$ let 
		$\psi((m_i,k_i,r_i)_{i<2})=\bigcup_{n \geq m_0,m_1}\aMapE{a_n}((\pi_{\aBoundStage{}}(m_i,k_i,r_i))_{i<2})$.
		It follows from the fact that $a_{n+1}$ one-step extends $a_n$ (using conditions \ref{con:extendvertex}, \ref{con:extendedge}, and the fact that $\restriction{\pi_{c,n,n'} \circ \pi_{c,n'}}{\dom(\pi_{c,n})}=\pi_{c,n}$) that $\phi$ and $\psi$ are well-defined.

		
		Now, we check that $\phi_X \circ \phi$ is a continuous homomorphism of $\digraph{b}{L}{b}$ to $\digraph{}{G}{}$. The continuity of this mapping is clear from its definition, we check that it is a homomorphism. To this end, let $(x_0,x_1) \in \digraph{b}{L}{b}$ with $x_i=(n_i,k_i,r_i)$, for $i<2$. We claim that  
		\begin{equation}
		\label{e:homomorphism}
		(\phi_{\digraph{}{G}{}} \circ \aMapE{})(x_0,x_1)=((\phi_X \circ \aMapV{})(x_0),(\phi_X \circ \aMapV{})(x_1)), 
		\end{equation}
		which is clearly sufficient, as the left side is an element of $\digraph{}{G}{}$. We show that if $U$ and $V$ are open neighborhoods of $(\phi_{\digraph{}{G}{}} \circ \aMapE{})(x_0,x_1)$ and $((\phi_X \circ \aMapV{})(x_0),(\phi_X \circ \aMapV{})(x_1))$, then $U \cap V \not = \emptyset$.
		
		By the definition of $\digraph{b}{L}{b}$ we have that $(\pi_{c,n}(x_i))_{i<2} \in \digraph{}{L}{b,n}$ for every $n \geq \max(n_0,n_1)$. Thus, using the continuity of $\aMapV{}, \aMapE{}, \phi_{\digraph{}{G}{}}$, and $\phi_X$ we can find an $n \geq \max(n_0,n_1) $ so large that $U \supseteq \phi_{\digraph{}{G}{}}( [\aMapE{a_n}((\pi_{c,n}(x_i))_{i<2})])$ and $V \supseteq \phi_X([\aMapE{a_n}\circ\pi_{c,n}(x_0)]) \times \phi_X([\aMapV{a_n}\circ \pi_{c,n}(x_1)])$. 
		
		Let $\gamma$ be a configuration compatible with $a_n$.
		Then by \eqref{e:approxhom} we have that 
		\begin{multline}
		\label{e:configuration}
		(\phi_{\digraph{}{G}{}} \circ \aMapE{\gamma})((\pi_{c,n}(x_i))_{i<2})=\\
		((\phi_X \circ \aMapV{\gamma})(\pi_{c,n}(x_0)),(\phi_X \circ \aMapV{\gamma})(\pi_{c,n}(x_1)).
		\end{multline}

		Then from the compatibility of $\gamma$ and $a_n$ it follows that
		\begin{multline*}
		((\phi_X \circ \aMapV{\gamma})(\pi_{c,n}(x_0)),(\phi_X \circ \aMapV{\gamma})(\pi_{c,n}(x_1))) \in\\
		\phi_X([\aMapV{a_n}\circ\pi_{c,n}(x_0)]) \times \phi_X([\aMapV{a_n}\circ \pi_{c,n}(x_1)]) \subseteq V
		\end{multline*}
		and
		\[
		\phi_{\digraph{}{G}{}}\circ \aMapE{\gamma} ((\pi_{c,n}(x_i))_{i<2}) \in \phi_{\digraph{}{G}{}}( [\aMapE{a_n}((\pi_{c,n}(x_i))_{i<2})]) \subseteq U,
		\]
		which together with \eqref{e:configuration} implies $U \cap V \not = \emptyset$, finishing the proof of Theorem \ref{t:g0style}.
	\end{proof}

	\section{Large gaps}
	\label{s:large}
	
	In this section, we complete the proof of Theorem \ref{t:main_intro}, and prove Theorem \ref{t:large_gaps}. Note that graphs (rather than digraphs) will be considered. Let $\linegraph{}{L}{}$ be a graph on the space $X$, and assume that $B \subset X$. The minimal cardinality of an $\restriction{\linegraph{}{L}{}}{X \setminus B}$-component will be denoted by $\mgs(B)$. 
	
	We start with an easy observation.
	\begin{claim}
		\label{cl:large}
		Let $L$ be a $\leq 2$-regular acyclic Borel graph on a standard Borel space $X$ that has the large gap property. Then there exists an increasing sequence $(B_n)_{n \in \N}$ of Borel subsets of $X$, such that $\bigcup_{n \in \N} B_n$ is $\linegraph{}{L}{}$-invariant, $\restriction{\connecteder{\linegraph{}{L}{}}}{X \setminus \bigcup_{n \in \N} B_n}$ is smooth, for every $n \in \N$ the $\restriction{L}{B_n}$-components are finite, and $\mgs(B_n) \to \infty$, as $n \to \infty$.  
	\end{claim}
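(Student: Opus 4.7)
The plan is to derive $(B_n)$ from a Borel witness $B$ of the large gap property by ``filling in'' the small finite gaps of $B$. Since $L$ is acyclic and $\leq 2$-regular, every $L$-component is a finite path, an $\N$-ray, or a copy of $\Z$. The first step is to pass to a suitably refined Borel witness $B^* \subseteq B$ (still meeting every $L$-component and still having arbitrarily large gaps) so that in every $\Z$-component, the gaps of $B^*$ in $X \setminus B^*$ either are cofinally large in both $L$-directions, or else include at least one \emph{infinite} gap. This refinement is needed because the raw large-gap property admits ``one-sided'' $\Z$-components in which the arbitrarily large gaps of $B$ accumulate on only one end and the other direction is $B$-syndetic; in such components the canonical Borel orientation induced by the unbounded side lets one Borel-thin $B$ on the bounded side to produce an infinite gap, migrating the component into the infinite-gap class.

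Next, let $S$ be the $L$-invariant Borel union of (a) all finite $L$-components, (b) all $\N$-ray components, and (c) the $\Z$-components in which $\restriction{L}{(X \setminus B^*)}$ has an infinite connected piece. On each of these a Borel selector exists --- the degree-$\leq 1$ endpoint for (a) and (b), and the extremal $B^*$-point bordering the infinite gap for (c), with ties broken by a fixed Borel linear ordering on $X$ --- so $\restriction{E_L}{S}$ is smooth. Let $M = X \setminus S$; by construction every $L$-component of $M$ is a $\Z$-path in which the gaps of $B^*$ are all finite and cofinally large in both $L$-directions.

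For each $n \in \N$, I would define
\[
B_n \;=\; \bigl((B^* \cap M) \cup \{x \in M : x \text{ lies in an } L\text{-connected subset of } X \setminus B^* \text{ of cardinality } < n\}\bigr) \cup S_n,
\]
where $S_n \subseteq S$ absorbs the small pieces of the smooth part: every $L$-component of $S$ of cardinality $< n$, together with an $L$-ball of radius $< n$ around a fixed Borel selector on each infinite component of $S$. Monotonicity is immediate; $\bigcup_n B_n = X$, so the complement is (vacuously) $L$-invariant with smooth $E_L$; each $\restriction{L}{B_n}$-component is finite, since in $M$ every maximal run of $B_n$ sits between two gaps of $B^*$ of size $\geq n$ while in $S$ the pieces are finite by construction; and every component of $\restriction{L}{(X \setminus B_n)}$ has cardinality $\geq n$, being either a ``big'' gap in $M$, a finite $S$-component of cardinality $\geq n$, or an infinite ray in $S$, so $\mgs(B_n) \to \infty$.

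The main obstacle is the refinement producing $B^*$: establishing that the set of one-sided $\Z$-components is $L$-invariant Borel, Borel-uniformly fixing the canonical orientation on them, and carrying out the Borel thinning that creates the infinite gap while keeping $B^*$ a large-gap set meeting every $L$-component. Once $B^*$ is in hand, the remainder of the argument is bookkeeping.
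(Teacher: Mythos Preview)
Your argument is correct, but it takes a longer route than the paper's and does more than the claim asks for. Two simplifications collapse your proof to the paper's.

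First, the refinement $B \to B^*$ is unnecessary. You introduce it so that one-sided $\Z$-components acquire an infinite gap and hence a selector (``the extremal $B^*$-point bordering the infinite gap''). But the very data you need to carry out the Borel thinning---the bounded direction, the value $N^*$ of the bounded-side $\limsup$, and the \emph{rightmost} gap of size $>N^*$---already \emph{is} a Borel selector on the one-sided components. So instead of thinning, the paper simply throws the one-sided components (together with the infinite-gap components, the finite paths, and the $\N$-rays) into a single $L$-invariant Borel set $S_0$ on which $\restriction{E_L}{S_0}$ is smooth, and never touches $B$. The ``main obstacle'' you flag is exactly the smoothness of $\restriction{E_L}{S_0}$, just packaged differently; the paper asserts it in one line, and your thinning procedure is one way to unpack that line.

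Second, the sets $S_n$ are unnecessary. You arrange $\bigcup_n B_n = X$ so that the complement conditions hold vacuously, but the claim only asks that $\bigcup_n B_n$ be $L$-invariant with smooth complement. The paper therefore sets
\[
B_n \;=\; B \cup \{\,x \in X \setminus S_0 : \text{the } \restriction{L}{(X\setminus B)}\text{-component of } x \text{ has size } < n\,\},
\]
and leaves $S_0$ outside $\bigcup_n B_n$ entirely. Your verification that the $\restriction{L}{B_n}$-components are finite and that $\mgs(B_n)\to\infty$ on the two-sided part $M$ is exactly the paper's argument on $X\setminus S_0$; the extra bookkeeping on $S$ can be dropped.

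What your approach buys is the stronger conclusion $\bigcup_n B_n = X$; what it costs is the refinement step and the $S_n$ construction, neither of which the claim requires.
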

	\begin{proof}
		Let $B$ be a Borel set witnessing the large gap property of $L$.
		The graph $\linegraph{}{L}{}$ restricted to an $\linegraph{}{L}{}$-component is an infinite, connected, $\leq 2$-regular graph, which can be partitioned to disjoint $\restriction{\linegraph{}{L}{}}{X \setminus B}$-components. Let $S_0$ be the union of those $\linegraph{}{L}{}$-components which
		\begin{itemize}
			\item contain an infinite $\restriction{\linegraph{}{L}{}}{X \setminus B}$-component or
			\item the $\limsup$ of the cardinality of the $\restriction{\linegraph{}{L}{}}{X \setminus B}$-components is finite in one of the directions.
		\end{itemize}
		It follows from the choice of $B$ that $S_0$ is Borel and $\restriction{\connecteder{\linegraph{}{L}{}}}{S_0}$ is smooth.
		Let \begin{multline*}B_{n}=B \cup \{x\in X \setminus S_0: \text{the $\restriction{\linegraph{}{L}{}}{X \setminus B}$-component of $x$ has size $< n$}\}.\end{multline*}
		Clearly, the sets $B_n$ are increasing, $X \setminus S_0=\bigcup_{n \in \N} B_n$, $\mgs(B_n) \geq n$. Finally, note that if the $\restriction{\linegraph{}{L}{}}{B_n}$-component of $x$ was infinite, then the cardinality of $\restriction{\linegraph{}{L}{}}{X \setminus B}$-components would be bounded by $n$ in some of the directions in the $L$-connected component of $x$, in other words $x \in S_0$ would hold.
	\end{proof}
	
	The next proposition is the essence of the argument. 
	
	\begin{proposition}
		\label{p:large_homo} 
		
		Assume that $c \in (2\N+1)^{\N}$. 
		\begin{enumerate}
			\item \label{p:borel_homo} Let $L$ be a $\leq 2$-regular acyclic Borel graph on the standard Borel space $X$. Assume that $(B_n)_{n \in \N}$ is an increasing sequence of Borel subsets of $X$ with $\bigcup_{n \in \N} B_n=X$, $\mgs(B_n) \to \infty$, and for every $n$ the $\restriction{\linegraph{}{L}{}}{B_n}$-components are finite. Then $L \leq_B \linegraph{b}{L}{c}$.
			\item \label{p:continuous_homo} If $c^0 \in (2\N+1)^{\N}$, $c^0(n) \to \infty$, then $\linegraph{b}{L}{c^0} \leq_c \linegraph{b}{L}{c}$.
		\end{enumerate}
		
	\end{proposition}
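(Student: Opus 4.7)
The plan is, in both parts, to invoke Claim \ref{c:bhom}: it suffices to produce a tower $(\phi_n)_{n \in \N}$ of Borel (respectively continuous) partial maps $\phi_n \colon \dom(\phi_n) \to \underlyingspace{}{X}{c,n}$ with increasing (respectively open) domains whose union is the whole source space, each $\phi_n$ a partial homomorphism to $\linegraph{}{L}{c,n}$, satisfying the projection compatibility $\phi_n = \pi_{c,n,n+1} \circ \phi_{n+1}$ on $\dom(\phi_n)$.

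For \eqref{p:borel_homo}, I would first refine $(B_n)$: using $\mgs(B_n) \to \infty$ and subdividing the mergers in $B_{n+1} \setminus B_n$ into many finer intermediate stages, the sequence may be arranged so that for every $n$, each $\restriction{L}{B_{n+1}}$-component contains at most two $\restriction{L}{B_n}$-components, and the ``gap'' between them (the newly absorbed $\restriction{L}{X \setminus B_n}$-component) has at most $c(n+1)$ vertices. Set $\dom(\phi_n) = B_n$. Since $L$ is acyclic of vertex degree at most two, each $\restriction{L}{B_n}$-component is a finite path; using a fixed Borel linear order on $X$, I canonically orient each such path and Borel-select an embedding into the finite path $\linegraph{}{L}{c,n}$ that sends the endpoint facing the next gap to the distinguished vertex $s_n$. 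Inductively, $\phi_{n+1}$ extends $\phi_n$ by lifting the two old embeddings into the two copies of $\linegraph{}{L}{c,n}$ sitting inside $\linegraph{}{L}{c,n+1}$ and routing the absorbed gap through the odd-length middle path $L_{c(n+1)}$, with any length or parity surplus absorbed by back-and-forth folding inside this middle path.

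For \eqref{p:continuous_homo}, I would specialize to $L = \linegraph{b}{L}{c^0}$. Claim \ref{c:basic}\eqref{c:largegap} combined with $c^0(n) \to \infty$ yields the large gap property. Rather than invoking \eqref{p:borel_homo} directly (which delivers only a \Borel homomorphism), I would work with the clopen exhaustion $\dom(\phi_n) = \set{(m,k,r) \in \underlyingspace{b}{X}{c^0}}[m \leq g(n)]$ for an increasing function $g$, and define $\phi_n = \psi_n \circ \pi_{c^0,g(n)}$ for a compatible family of graph homomorphisms $\psi_n \colon \underlyingspace{}{X}{c^0,g(n)} \to \underlyingspace{}{X}{c,n}$. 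The combinatorial construction of $\psi_n$ parallels that of \eqref{p:borel_homo}, with $g$ chosen (using $c^0(n) \to \infty$) to grow fast enough that $c^0(g(n+1))$ dominates the walk length required at each inductive step. Continuity then follows from the ``moreover'' clause of Claim \ref{c:bhom}, since each $\phi_n$ factors through the continuous projection $\pi_{c^0,g(n)}$ defined on a clopen set.

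The main obstacle in both parts lies in the compatibility bookkeeping: controlling how components merge in \eqref{p:borel_homo} by thinning $(B_n)$, and choosing $g$ suitably in \eqref{p:continuous_homo}. Both strategies rely on the large gap property combined with the hypothesis that the entries of $c$ (and $c^0$) are odd, which is exactly what guarantees that walks of the required odd length can always be realized within the middle paths.
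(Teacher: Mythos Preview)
Your plan differs from the paper's in a way that creates a real obstacle. The paper does not refine $(B_n)$; it passes to a \emph{subsequence} $(B_{k_n})$ with $\mgs(B_{k_n}) > 2\,\length(\linegraph{}{L}{c,n+1})$ and proves a one-step extension lemma (Lemma~\ref{l:inductive}): any Borel homomorphism $\restriction{L}{B_{k_n}} \to \linegraph{}{L}{c,n}$ lifts to one $\restriction{L}{B_{k_{n+1}}} \to \linegraph{}{L}{c,n+1}$. On each old component the lift is simply $\phi\concatt(\varepsilon)$ for some bit $\varepsilon$, and each gap---having more than $2\,\length(\linegraph{}{L}{c,n+1})$ vertices---can be traversed by a walk in $\linegraph{}{L}{c,n+1}$ ending at $\phi(v)\concatt(\varepsilon')$ for whichever $\varepsilon'$ gives the right parity; Claim~\ref{c:basic}\eqref{c:oddistance} guarantees one of them does. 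No control over where component endpoints land is ever required.

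Your approach instead refines $(B_n)$ so as to force \emph{small} gaps, hoping to thread each absorbed gap through the middle segment $L_{c(n+1)}$. Two things break. First, the arithmetic is inverted: the two copies of $s_n$ in $\linegraph{}{L}{c,n+1}$ are at distance $c(n+1)+2$, so a gap of $g \le c(n+1)$ vertices yields a walk of length $g+1 \le c(n+1)+1$, too short to cross; folding inside a path can neither shorten a walk below the distance nor alter its parity. Second, even if that were repaired, the alignment ``send the endpoint facing the next gap to $s_n$'' does not propagate: after a merge, the new component's endpoints are the \emph{other} ends of the two pieces, which the compatibility $\phi_n = \pi_{c,n,n+1}\circ\phi_{n+1}$ pins to $\phi_n(e_i)\concatt(j)$---generically nowhere near $s_{n+1}$---with no freedom left to relocate them for the next merge. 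The hypothesis $\mgs(B_n)\to\infty$ should be used to make gaps \emph{large} relative to the target, not small; once a gap exceeds $2\,\length(\linegraph{}{L}{c,n+1})$, any prescribed start and end can be joined and no endpoint bookkeeping is needed. The same extension lemma, specialised to the clopen sets $\{(l,m,r)\in\underlyingspace{b}{X}{c^0}:l<k\}$, then gives the continuous version for part~\eqref{p:continuous_homo}.
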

	
		Our strategy is to inductively define sequences $k_n \in \N$ and $\phi_n:B_{k_n} \to \underlyingspace{}{X}{c,n}$, and appeal to Claim \ref{c:bhom}.
		
		We start with the key lemma.

		\begin{lemma}
			\label{l:inductive}
			\begin{enumerate}
				\item \label{p_lemma:borel}
				Let $B \subseteq B' \subseteq X$ be Borel, $n \in \N$ be given with the properties that $\mgs(B) > 2\cdot \lgth(\linegraph{}{L}{c,n+1})$, every component of $\restriction{\linegraph{}{L}{}}{B}$ is finite, and $\phi$ is a Borel homomorphism from $\restriction{\linegraph{}{L}{}}{B}$ to $\linegraph{}{L}{c,n}$. Then there exists a homomorphism $\phi'$ from $\restriction{\linegraph{}{L}{}}{B'}$ to $L_{c,n+1}$ so that  $\restriction{\pi_{c,n,n+1} \circ \phi'}{B} =\phi$ holds.
				
				\item \label{p_lemma:continuous} If moreover, $\linegraph{}{L}{}=\linegraph{b}{L}{c^0}$,  $B=\{(l,m,r)\in \underlyingspace{b}{X}{c^0}: l <k\}$, $B'=\{(l,m,r)\in \underlyingspace{b}{X}{c^0}: m <k'\}$ for $k<k'$, and $\phi$ is continuous then $\phi'$ can be taken to be continuous.
			\end{enumerate}

		\end{lemma}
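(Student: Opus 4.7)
The plan is to build $\phi'$ one $\restriction{L}{B'}$-component at a time. Since $L$ is $\le 2$-regular and acyclic, every such component $C$ is a path, ray, or doubly infinite line; the $\restriction{L}{B}$-islands inside $C$ (finite by hypothesis) are separated by \emph{gaps} in $B' \setminus B$ of length exceeding $2 \lgth(\linegraph{}{L}{c,n+1})$.

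On each $\restriction{L}{B}$-island $K$, the compatibility $\restriction{\pi_{c,n,n+1} \circ \phi'}{B} = \phi$ combined with the requirement that $\phi'$ be a homomorphism forces $\phi'(x) = \phi(x) \concatt (j)$ for some common $j \in \set{0,1}$ -- the \emph{side} of $K$ -- because inside each of the two copies of $\linegraph{}{L}{c,n}$ sitting in $\linegraph{}{L}{c,n+1}$, only vertices agreeing on their last coordinate are adjacent. For a gap $P$ of length $g$ joining islands $K_1$, $K_2$ at endpoints $\phi$-mapped to $v_1$, $v_2$, one observes that a homomorphism from $P$ into $\linegraph{}{L}{c,n+1}$ with prescribed endpoint values $v_i \concatt (j_i)$ exists iff $g$ has the same parity as, and is at least, the distance between these vertices in $\linegraph{}{L}{c,n+1}$. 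The size hypothesis $g > 2 \lgth(\linegraph{}{L}{c,n+1})$ gives the inequality, and Claim \ref{c:basic}(\ref{c:oddistance}) tells us that the two copies of any vertex lie at odd distance in $\linegraph{}{L}{c,n+1}$, so flipping exactly one of $j_1$, $j_2$ flips the parity of the required walk; hence exactly one of ``$j_1 = j_2$'' or ``$j_1 \ne j_2$'' is compatible with $g$.

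These parity conditions along $C$ define a propagation rule on the sides of consecutive islands; since $C$ is a path, ray, or line, the rule is globally consistent once one island's side is fixed. I would obtain Borel-measurability by fixing a Borel linear order on $X$ and representing each $\restriction{L}{B}$-island by its minimum (possible because the island equivalence is smooth, having finite classes); in the setting of Proposition \ref{p:large_homo}, where the $\restriction{L}{B'}$-components are also finite, one then Borel-selects a root-island per $\restriction{L}{B'}$-component from which to propagate. Once sides are fixed, on each gap $P$ I would define $\restriction{\phi'}{P}$ explicitly: when the two sides agree, as a length-$g$ walk within a single copy of $\linegraph{}{L}{c,n}$ padded by back-and-forth moves; otherwise, as a walk that traverses the bridge vertices $(0), \ldots, (c(n+1))$ exactly once, with back-and-forth padding on either end to reach length $g$.

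For the continuous version (2), with $L = \linegraph{b}{L}{c^0}$ and the level-cut sets $B$, $B'$, each $\restriction{L}{B}$-island is canonically parametrized by a tail in $\Cantorspace$ via the projections, so the side assignment and the gap-walk construction depend only on continuous local coordinates of a vertex, yielding a continuous $\phi'$. The main obstacle I expect throughout is the Borel (respectively continuous) side assignment -- essentially a $\Z/2$-coloring problem on the reduced graph of islands -- which is handled by smoothness of the appropriate component equivalence relations in the Borel setting, and by canonical coordinate-comparisons in the continuous setting.
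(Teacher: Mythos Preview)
Your approach is correct and essentially matches the paper's: both decompose each $\restriction{L}{B'}$-component into $B$-islands separated by long gaps, assign a side $j\in\{0,1\}$ to each island, and use the parity observation from Claim~\ref{c:basic}\eqref{c:oddistance} to propagate sides across gaps and fill them with walks. The only difference is bookkeeping for Borel measurability: the paper enumerates, for each length $m$, the finitely many homomorphisms from a path of length $m$ into $L_{c,n+1}$ and picks the one of minimal index satisfying the compatibility constraint, whereas you propagate explicitly from a Borel-selected root island---both devices work once one uses (as the paper implicitly does and you note) that the $\restriction{L}{B'}$-components are finite in the intended application.
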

		\begin{proof}
			First we show \eqref{p_lemma:borel}. Note that the graph $\restriction{\linegraph{}{L}{}}{B'}$ is a disjoint union of finite paths. Fix a Borel linear ordering $<$ of $X$. We will define $\phi'$ so that the value $\phi'(x)$ will only depend on
			\begin{enumerate}[label=(\alph*)]
				
				\item \label{d:previous} the values of $\phi$ on the $\restriction{\linegraph{}{L}{}}{B'}$-component of $x$,
				\item \label{d:enumeration}the index of $x$ in the unique enumeration $(v^x_i)_{i\leq m^x}$ of the $\restriction{\linegraph{}{L}{}}{B'}$-component of $x$, with the property that $v^x_0<v^x_{m^x}$ and $\forall i<m^x$ we have $(v^x_i,v^x_{i+1}) \in \linegraph{}{L}{}$.
			\end{enumerate}

			{\sc Claim.} For a connected component of $\restriction{\linegraph{}{L}{}}{B'}$ let $(v_i)_{i<m}$ be the enumeration described in \ref{d:enumeration}. There exists a homomorphism $\psi$ of  $\restriction{\linegraph{}{L}{}}{\{v_0,\dots,v_{m}\}}$ to $\linegraph{}{L}{c,n+1}$ so that we have $\restriction{\pi_{c,n,n+1}\circ\psi}{\{v_0,\dots,v_{m}\} \cap B}=\restriction{\phi}{\{v_0,\dots,v_{m}\} \cap B}$. 
			\begin{proof}[Proof of the Claim.]
				In order to see that such a homomorphism $\psi$ exists, note that the set $\{v_0,\dots,v_{m}\}$ decomposes into connected components of $\restriction{\linegraph{}{L}{}}{B}$ and paths connecting them: more precisely, there are an odd number $l$, a sequence $0 \leq i_0<i_1<\dots<i_l \leq m$ with the property that if $i \in [0,i_0) \cup (i_1,i_2) \cup (i_3,i_4) \cup \dots \cup (i_l,m]$ (where the first and last intervals could be empty) then $v_i \in B' \setminus B$, while for every $j<l$ even, $\{v_{i_j},\dots,v_{i_{j+1}}\}$ is an $\restriction{\linegraph{}{L}{}}{B}$-component.
				
				Define $\psi(v_i)$ for $i \in [i_0,\dots,i_1]$ to be $\phi(v_i) \concatt (0)$, and extend this to a homomorphism from $\restriction{\linegraph{}{L}{}}{\{v_0,\dots,v_{i_1}\}}$ to $\linegraph{}{L}{c,n+1}$. Now, assume that $\psi$ has been defined on $\{v_i:i\leq i_j\}$ for $j<l-1$ odd with $\restriction{\pi_{c,n,n+1} \circ \psi}{\{v_i:i\leq i_j\}\intersection B}=\restriction{\phi}{\{v_i:i\leq i_j\}\intersection B}$ remaining true on these vertices. We will extend $\psi$ to $\{v_i:i\leq i_{j+2}\}$. Since $\pi_{c,n,n+1} (\psi(v_{i_j}))=\phi(v_{i_j})$ holds, $\psi(v_{i_j})$ has the form $\phi(v_{i_j}) \concatt (\varepsilon)$ for some $\varepsilon \in \{0,1\}$. Note that by $\mgs(B)>2 \cdot \length(\linegraph{}{L}{c,n+1})$, we have that $i_{j+1}-i_j> 2 \cdot \length(\linegraph{}{L}{c,n+1})$. 
				
				If the parity of the distance of $\phi(v_{i_j})$ and $\phi(v_{i_{j+1}})$ in $\linegraph{}{L}{c,n}$ is the same as the parity of $i_{j+1}-i_j$ then by $i_{j+1}-i_j> 2 \cdot \length(\linegraph{}{L}{c,n+1})> \length(\linegraph{}{L}{c,n})$, the map $\psi$ extends to a homomorphism from $\restriction{\linegraph{}{L}{}}{\{v_0,\dots,v_{i_{j+1}}\}}$ to $\linegraph{}{L}{c,n+1}$ with $\psi(v_{i_{j+1}})=\phi(v_{i_{j+1}}) \concatt (\varepsilon)$. In this case define $\psi(v_i)=\phi(v_i) \concatt (\varepsilon)$ if $i \in [i_{j+1},i_{j+2}]$.
				
				Otherwise, if the parity is different, using $i_{j+1}-i_j> 2 \cdot \length(\linegraph{}{L}{c,n+1})$ again and the fact that the distance of $\phi(v_{i_{j+1}}) \concatt (0)$ and $\phi(v_{i_{j+1}}) \concatt (1)$ is odd in $\linegraph{}{L}{c,n}$ (see \eqref{c:oddistance} of Claim \ref{c:basic}) we have that $\psi$ can be extended to a homomorphism from $\restriction{\linegraph{}{L}{}}{\{v_0,\dots,v_{i_{j+1}}\}}$ to $\linegraph{}{L}{c,n+1}$ with $\psi(v_{i_{j+1}})=\phi(v_{i_{j+1}}) \concatt (1-\varepsilon)$. In this case define $\psi(v_i)=\phi(v_i) \concatt (1-\varepsilon)$ if $i \in [i_{j+1},i_{j+2}]$.
				
				This inductive process yields a homomorphism from $\restriction{\linegraph{}{L}{}}{\{v_0,\dots,v_{i_l}\}}$ to $\linegraph{}{L}{c,n+1}$ with $\restriction{\pi_{c,n,n+1}\circ\psi}{\{v_0,\dots,v_{m}\}}=\restriction{\phi}{\{v_0,\dots,v_{i_l}\}}$, and this of course can be extended to a homomorphism to the set $\{v_{i_l},\dots, v_m\}$ (which is disjoint from $B$). This finishes the proof of the claim.
			\end{proof}
			
			For a given $m$ there are only finitely many homomorphisms from the path of length $m$ to $L_{c,n+1}$. Fix an enumeration of those homomorphisms $(\psi^m_j)_{j<l_m}$ for each $m \in \N$. Now, for an $x \in B'$ let $(v^x_i)_{i \leq m^x}$ be the enumeration described in \ref{d:enumeration}, and $j^x$ minimal index for which $\psi^{m^x}_{j^x}$ satisfies the Claim. It is clear that the map $x \mapsto \psi^{m^x}_{j^x}$ is Borel, and so is the map $\phi'(x)=\psi^{m^x}_{j^x}(x)$. Moreover, $\phi'$ satisfies that it depends only on \ref{d:previous} and \ref{d:enumeration} and the requirements of the lemma. This finishes the proof of \eqref{p_lemma:borel}.
			
			Now assume that the assumptions of \eqref{p_lemma:continuous} hold, and let $<$ be the lexicographic ordering on $\linegraph{b}{L}{c}$. It is enough to check that the map $\phi'$ defined as in the first part is a continuous mapping. For a given $x$ the value $\phi'(x)$ depends only on finitely many values. Hence, it suffices to show that if $x_n \to x$ then the values determining $\phi'(x_n)$ converge to the values determining $\phi'(x)$. 
			
			From the definition of $\linegraph{b}{L}{c^0}$ it follows that a connected component of $\restriction{\linegraph{b}{L}{c^0}}{B'}$ contains the points of the form $\{(l,m,t \concatt r):l < k', m \leq c^0(l), t\in 2^{k'-l-1}\}$ for some $r \in 2^\N$. Moreover, if $(l^n_i,m^n_i,t^n_i \concatt r^n) \to  (l_i,m_i,t_i \concatt r)$ for $i<2$ with $l_i < k'$, then 
			$(l_i,m_i,t_i \concatt r)_{i<2} \in \linegraph{b}{L}{c}$ holds iff $(\pi_{c^0,k'-1}(l_i,m_i,t_i \concatt r))_{i<2} \in \linegraph{}{L}{c^0,k'-1}$ iff $(\pi_{c^0,k'-1}(l^n_i,m^n_i,t^n_i \concatt r^n))_{i<2} \in \linegraph{}{L}{c^0,k'-1}$ is true for every large enough $n$. This, and the fact that $<$ is open, implies that if $(v^{x_n}_i)_{i\leq m^{x_n}}$ and $(v^{x}_i)_{i\leq m^x}$ are the enumerations of $\restriction{\linegraph{b}{L}{c^0}}{B'}$-components described in \ref{d:enumeration}, then $m^{x_n}$ must stabilize to $m_x$ and $v^{x_n}_i \to v^{x}_i$ holds for all $i \leq m_x$. Hence, since $B$ is clopen and $\phi$ is continuous, we get that $\phi'(x_n)=\phi'(x)$ for every large enough $n$.  
		\end{proof}
		
		\begin{proof}[Proof of Proposition \ref{p:large_homo}] We define a sequence $(k_n,\phi_n)_{n \in \N}$ inductively. For convenience, we will assume that $B_0=\emptyset$. Choose $k_0=0$, then $B_{k_0}=\emptyset$, $\phi_0=\emptyset$ and $\mgs(B_{k_0})=\aleph_0>2\cdot \lgth(\linegraph{}{L}{c,1})$. Now assume that $(\phi_i,k_i)_{i \leq n}$ had already been defined with the properties that $\mgs(B_{k_n})>2 \cdot \lgth(\linegraph{}{L}{c,n+1})$ and $\phi_i$ is a Borel homomorphism from $\restriction{\linegraph{}{L}{}}{B_{k_{i}}$ to $\linegraph{}{L}{c,i}}$. Choose $k_{n+1}$ so large that  $\mgs(B_{k_{n+1}})>2 \cdot \lgth(\linegraph{}{L}{c,n+2})$. An application of \eqref{p_lemma:borel} of Lemma \ref{l:iterate} to $B_{k_n}, B_{k_{n+1}}, n$,  and $\phi_{n}$ yields a homomorphism $\phi_{n+1}$ of $\restriction{\linegraph{}{L}{}}{B_{k_{n+1}}$ to $\linegraph{}{L}{c,n+1}}$ so that $\restriction{\pi_{c,n,n+1} \circ\phi_{n+1}}{B_{k_n}}= \phi_n$. Thus, we obtain a sequence $(k_n,\phi_n)_{n \in \N}$ that satisfies the assumptions \eqref{p:union}-\eqref{p:compatible} of Claim \ref{c:bhom}, which finishes the proof of the first part.
		
		Finally, a similar proof yields the second half: first, note that that if $B=\{(l,m,r)\in \underlyingspace{b}{X}{c^0}: l <k\}$ then $\mgs(B)=\min\{c(i)+1:i \geq k\}$. This, and the assumption that $c^0(n) \to \infty$ allow us to find the sequence $(B_{k_n})_{n \in \N}$ and iterate \eqref{p_lemma:continuous} of Lemma \ref{l:inductive}. This yields a sequence $(k_n,\phi_n)_{n \in \N}$  satisfying \eqref{p:union}-\ref{p:continuity} of Claim \ref{c:bhom}.
	\end{proof}
	
	Combining the preceding theorems we obtain the following result, which of course implies Theorem \ref{t:main_intro}. 
	
	\begin{theorem}
		\label{t:main} Assume that $G$ is an analytic graph on a Hausdorff space. Then exactly one of the following holds.
		\begin{enumerate}
			\item \label{c:chromatic2} $\Borelchromatic{G} \leq 2$, i.e., $G$ is Borel bipartite.
			\item \label{c:homomorphism} $\linegraph{b}{L}{0}$ admits a continuous homomorphism to $G$.
		\end{enumerate}
	\end{theorem}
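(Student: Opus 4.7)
The approach is to chain Theorem \ref{t:g0style} with Proposition \ref{p:large_homo}(\ref{p:continuous_homo}). First I would dispose of mutual exclusivity of \eqref{c:chromatic2} and \eqref{c:homomorphism}. Applying Claim \ref{c:basic}(\ref{c:category}) with $B = \underlyingspace{b}{X}{c^0}$, where $c^0$ is the sequence $c^0(0) = 1$ and $c^0(n) = 2n - 1$ for $n \geq 1$ that defines $\Lzero$, yields $\Borelchromatic{\Lzero} = 3$. Since continuous homomorphisms are Borel and $\Borelchromatic{\cdot}$ is monotone under $\Borelhomomorphism$, any continuous homomorphism $\Lzero \to G$ would force $\Borelchromatic{G} \geq 3$, ruling out \eqref{c:chromatic2}.

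For the nontrivial direction, assume $\Borelchromatic{G} > 2$. Viewing $G$ as an analytic digraph on a Hausdorff space, Theorem \ref{t:g0style} produces an \oddpair{\aleph_0} $b = (c, d)$ together with a continuous homomorphism $\phi$ from the oriented graph $\digraph{b}{L}{b}$ to $G$. The next step is to observe that, because $G$ is symmetric, the same map $\phi$ is in fact a continuous homomorphism from the underlying undirected graph $\linegraph{b}{L}{c} = \digraph{b}{L}{b} \cup \digraph{b}{L}{b}^{-1}$ (see Claim \ref{c:basic}(\ref{c:symmetry})) to $G$: whenever $(x, y) \in \digraph{b}{L}{b}^{-1}$ we have $(y, x) \in \digraph{b}{L}{b}$, so $(\phi(y), \phi(x)) \in G$, and symmetry of $G$ yields $(\phi(x), \phi(y)) \in G$.

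Finally, since $c^0 \in (2\N+1)^\N$ and $c^0(n) \to \infty$, part \ref{p:continuous_homo} of Proposition \ref{p:large_homo} applied to $c$ and $c^0$ supplies a continuous homomorphism $\Lzero = \linegraph{b}{L}{c^0} \to \linegraph{b}{L}{c}$. Composing this with $\phi$ delivers the required continuous homomorphism $\Lzero \to G$, establishing \eqref{c:homomorphism}.

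All of the substantive work was carried out in Theorem \ref{t:g0style} and Proposition \ref{p:large_homo}, so I do not anticipate any real obstacle in assembling the proof; the only delicate point is the passage from the oriented basis element $\digraph{b}{L}{b}$ back to its undirected shadow $\linegraph{b}{L}{c}$, and this is handled cleanly by the symmetry argument above together with Claim \ref{c:basic}(\ref{c:symmetry}).
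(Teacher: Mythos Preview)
Your proposal is correct and matches the paper's proof essentially step for step: mutual exclusivity via Claim \ref{c:basic}(\ref{c:category}) and monotonicity, then Theorem \ref{t:g0style} followed by Proposition \ref{p:large_homo}(\ref{p:continuous_homo}) and composition. The only difference is that you spell out the symmetry argument for passing from $\digraph{b}{L}{b} \leq_c G$ to $\linegraph{b}{L}{c} \leq_c G$, which the paper leaves implicit.
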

	\begin{proof}
		The fact that \eqref{c:chromatic2} and \eqref{c:homomorphism} are mutually exclusive follows from the observations $\Borelchromatic{\linegraph{b}{L}{0}}>2$ (\eqref{c:category} of Claim \ref{c:basic}) and that $\linegraph{b}{L}{0} \leq_c G$ implies $\Borelchromatic{\linegraph{b}{L}{0}} \leq \Borelchromatic{G}$. 
		
		Now, assume that \eqref{c:chromatic2} is false. Recall that $\linegraph{b}{L}{0}=\linegraph{b}{L}{c^0}$, where $c^0(0)=1$, and  $c^0(n)=2n-1$. Then by Theorem \ref{t:g0style} there exists an \oddpair{\aleph_0} $b=(c,d)$, such that $\digraph{b}{L}{b} \leq_c G$. But then $\linegraph{b}{L}{c} \leq_c G$, and using \eqref{p:continuous_homo} of Proposition \ref{p:large_homo} we obtain $\linegraph{b}{L}{0} \leq_c \linegraph{b}{L}{c} \leq_c G$.
	\end{proof}
	
	We conclude this section with proving Theorem \ref{t:large_gaps}, that is:
	
	\begin{largegaps}
		
		Suppose that $X$ is a standard \Borel space and $L$ is an acyclic \Borel graph on $X$ of vertex degree at most two. Then the following are equivalent:
		\begin{enumerate}
			\item There is a \Borel homomorphism from $L$ to every \Borel graph $G$ of \Borel chromatic number at least three.
			\item \label{c:largegapmodulo2} The graph $L$ has the large gap property modulo a two-colorable set.
		\end{enumerate}
		
	\end{largegaps}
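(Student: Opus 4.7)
The plan is to prove the two implications separately, using Theorem \ref{t:main_intro}, Proposition \ref{p:large_homo}(1), and Claim \ref{cl:basiclines}(2) as the main tools.

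For (2) $\Rightarrow$ (1), suppose $M$ is an $L$-invariant Borel set witnessing (2). I would apply Claim \ref{cl:large} to $\restriction{L}{X \setminus M}$ to further decompose $X \setminus M$ into an $L$-invariant smooth piece $S_0$ and an $L$-invariant piece $\bigcup_n B_n$ satisfying the hypotheses of Proposition \ref{p:large_homo}(1). Any Borel graph $G$ with $\Borelchromatic{G} \ge 3$ contains at least one edge $(y_0, y_1)$. Using Borel $2$-colorings $c$ of $\restriction{L}{M}$ (from (2)) and $\restriction{L}{S_0}$ (from Claim \ref{cl:basiclines}(1)), define a Borel homomorphism $M \cup S_0 \to G$ by $x \mapsto y_{c(x)}$. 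On $\bigcup_n B_n$, Proposition \ref{p:large_homo}(1) gives $\restriction{L}{\bigcup_n B_n} \Borelhomomorphism \Lzero$, which composed with the continuous homomorphism $\Lzero \continuoushomomorphism G$ from Theorem \ref{t:main_intro} yields a Borel homomorphism to $G$. These three homomorphisms glue into a single Borel homomorphism $L \to G$ since the pieces are $L$-invariant.

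For (1) $\Rightarrow$ (2), I would apply (1) with $G = \Lzero$, noting $\Borelchromatic{\Lzero} \ge 3$ by Claim \ref{c:basic}(7), to obtain a Borel homomorphism $\phi \from L \to \Lzero$. Let $M$ be the Borel set from Claim \ref{cl:basiclines}(2) applied to $\phi$; then $\Borelchromatic{\restriction{L}{M}} \le 2$, and on $X \setminus M$ the map $\phi$ is component-surjective. I claim that
\[
B = \phi^{-1}\big(\{(0,0,r) \suchthat r \in \Cantorspace\}\big) \cap (X \setminus M)
\]
witnesses the large gap property of $\restriction{L}{X \setminus M}$. The set $B$ is Borel and meets every $\restriction{L}{X \setminus M}$-component, by component-surjectivity together with the fact that the ``spine'' $\{(0,0,r) \suchthat r \in \Cantorspace\}$ meets every $\Lzero$-component (a direct consequence of Claim \ref{c:basic}(4)).

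The main obstacle is verifying that $\restriction{L}{(X \setminus M) \setminus B}$ contains $L$-connected subsets of arbitrarily large finite size within each component. Given a component $C$ of $\restriction{L}{X \setminus M}$, the image $C' = \phi(C)$ is a ray or line in $\Lzero$ (by the degree-at-most-two structure from Claim \ref{c:basic}(5)), and $C$ itself must be infinite, since a finite $L$-component could not surjectively map onto the infinite $C'$ and would therefore lie in $M$. By Claim \ref{c:basic}(8), $C'$ contains $\Lzero$-connected subsets of $C' \setminus \{(0,0,r)\}$ of arbitrarily large size $n$. Fixing such a gap $G'_n$ of cardinality $n$, pick a ``middle'' vertex $v \in G'_n$ whose $\Lzero$-distance to both endpoints of $G'_n$ is at least $\lfloor (n-1)/2 \rfloor$, and lift to some $w \in \phi^{-1}(v) \cap C$ by component-surjectivity. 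The $L$-connected component $A_w$ of $w$ in $\restriction{L}{C \setminus B}$ then has size at least $\lfloor (n-1)/2 \rfloor + 1$: since $\phi$ is a homomorphism, each $L$-edge moves $\phi$ by exactly one step in $C'$, so leaving $A_w$ forces $\phi$ to hit the spine, which in turn requires traveling at least $\lfloor (n-1)/2 \rfloor + 1$ $\Lzero$-steps from $v$ to exit $G'_n$; at least one direction of $C$ from $w$ is infinite, so this many $L$-steps are available before hitting $B$. As $n$ grows unboundedly, this produces $L$-connected subsets of $C \setminus B$ of arbitrarily large size, completing the proof.
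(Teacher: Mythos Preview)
Your proof is correct and follows essentially the same approach as the paper's. Both directions use the same ingredients in the same way: for $(2)\Rightarrow(1)$ you decompose via Claim~\ref{cl:large}, handle the two-colorable pieces by mapping to a single edge, and use Proposition~\ref{p:large_homo}(1) together with Theorem~\ref{t:main_intro} on the remaining piece; for $(1)\Rightarrow(2)$ you apply the hypothesis to $G=\Lzero$, strip off the non-surjective part via Claim~\ref{cl:basiclines}(2), and pull back the spine witness from Claim~\ref{c:basic}(8). The only difference is that the paper compresses your detailed ``middle-vertex'' verification of the large-gap condition into the sentence ``it is easy to see from the fact that $\restriction{\phi}{X\setminus M}$ maps $L$-components onto $\Lzero$-components that $B'$ witnesses the large gap property''; your explicit argument for this step is a fine way to fill in that claim.
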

	\begin{proof}
		Assume first \eqref{c:largegapmodulo2}. Using Claim \ref{cl:large} together with \eqref{p:borel_homo} of \ref{p:large_homo} we obtain a sequence $(B_n)_{n \in \N}$ of Borel sets and an $L$-invariant Borel set $M$ such that $\restriction{L}{\bigcup_n B_n} \leq_B \linegraph{b}{L}{0}$, $\restriction{\connecteder{L}}{X \setminus(M \cup \bigcup_{n} B_n)}$ is smooth, and $X \setminus \bigcup_{n} B_n$ is $L$-invariant, and $\Borelchromatic{\restriction{\linegraph{}{L}{}}{M}}\leq 2$. By Claim \ref{cl:basiclines} and the invariance of $M$ we have that  $\Borelchromatic{\restriction{\linegraph{}{L}{}}{X \setminus \bigcup_{n} B_n}} \leq 2$, so $\restriction{\linegraph{}{L}{}}{X \setminus \bigcup_{n} B_n}$ admits a Borel homomorphism to each non-empty Borel graph. Putting together the Borel homomorphisms on the invariant sets $X \setminus \bigcup_{n} B_n$ and $\bigcup_n B_n$ we obtain $L \leq_B \linegraph{b}{L}{0}$. Thus, by Theorem \ref{t:main}, $L$ admits a Borel homomorphism to each Borel graph with Borel chromatic number $>2$. 
		
		Now assume that $L \leq_B \linegraph{b}{L}{0}$, witnessed by the Borel map $\phi$. Let $M$ be the set from \eqref{c:ontohomo} of Claim \ref{cl:basiclines} and let $B \subseteq \underlyingspace{b}{X}{0}$ witness that $\linegraph{b}{L}{0}$ has the large gap property (\eqref{c:largegap} of Claim \ref{c:basic}).  To show the theorem, it is enough to check that $\restriction{\linegraph{}{L}{}}{X \setminus M}$ has the large gap property. Let $B'= (\restriction{\phi}{X \setminus M})^{-1}(B)$, it is easy to see from the fact that $\restriction{\phi }{X \setminus M}$ maps $\linegraph{}{L}{}$-components onto $\linegraph{b}{L}{0}$-components that  $B'$ witnesses the large gap property of $\restriction{\linegraph{}{L}{}}{X \setminus M}$.
	\end{proof}

	\section{An antibasis result for digraphs}
	\label{s:digraphs}
	Finally, we show a slightly more general version of Theorem \ref{t:antibasis}, that is:
	
	\begin{theorem}
		\label{t:antibasis2}
		Suppose that $\digraph{}{G}{}$ is an analytic digraph on a Hausdorff space
		with $\Borelchromatic{\digraph{}{G}{}}>2$. Then there is a sequence
		$\sequence{\digraph{}{L}{t}}[t \in \Cantorspace]$ of \Borel oriented graphs on standard Borel spaces
		such that for each $t \in \Cantorspace$ we have $\digraph{}{L}{t} \leq_c \digraph{}{G}{}$, and $\Borelchromatic{\digraph{}{L}{t}}>2$, and any
		Borel graph that admits a \Borel
		homomorphism to at least two oriented graphs of the form $\digraph{}{L}{t}$
		has a \Borel two-coloring.
	\end{theorem}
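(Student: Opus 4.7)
The plan is to prove Theorem~\ref{t:antibasis2} in two stages: (i) branch the construction in the proof of Theorem~\ref{t:g0style} to produce a perfect family $(L_t)_{t \in \Cantorspace}$ of oriented graphs with $L_t \leq_c \digraph{}{G}{}$, and (ii) exploit the opposing orientations at differing branching stages to force a Borel $2$-coloring of any common pre-image.

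\textbf{Construction of the family.} Fix $f \in \Bairespace$ growing sufficiently fast for Theorem~\ref{t:g0style}. In that proof, each inductive one-step extension of a non-terminal approximation $a_n$ requires an odd-length $\digraph{}{G}{}$-path $p$ with $|\Sigma(d_p)| > f(n)\sum_{j \leq n}|\Sigma(\aCodeD{a_n}(j))|$. Reversing $p$ yields a path of the same magnitude but opposite directed length; swapping the witnessing configurations $\gamma_0, \gamma_1$ accordingly gives a valid one-step extension whose newly added $\Sigma(\aCodeD{a_{n+1}}(n+1))$ has the opposite sign. I would recursively build, for each $s \in \functions{<\N}{2}$, an approximation $a_s$ of stage $|s|$ so that for $i \in \{0,1\}$, $a_{s\concatt (i)}$ one-step extends $a_s$ with $\mathrm{sgn}(\Sigma(\aCodeD{a_{s\concatt (i)}}(|s|+1))) = (-1)^i$. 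Setting $b_t := \bigcup_m b_{a_{t \upharpoonright m}}$ and $L_t := \mathbb{L}_{b_t}$ for $t \in \Cantorspace$, the construction at the end of the proof of Theorem~\ref{t:g0style} supplies $L_t \leq_c \digraph{}{G}{}$, and $\Borelchromatic{L_t} = 3$ by Claim~\ref{c:basic}(\ref{c:category}).

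\textbf{Antibasis property.} Let $H$ be a Borel digraph with Borel homomorphisms $\phi_i \colon H \to L_{t_i}$ for $i \in \{0,1\}$ and $t_0 \neq t_1$, and let $n$ be the least stage where $d_{t_0}(n) \neq d_{t_1}(n)$; by construction $\Sigma(d_{t_0}(n)) = -\Sigma(d_{t_1}(n))$ with common magnitude $N_n$ dominating $\sum_{j<n}|\Sigma(d_{t_i}(j))|$. I would prove $\Borelchromatic{H} \leq 2$ by establishing a uniform bound on $|\dlength(p)|$ for odd-length $H$-paths $p$, which, via Claim~\ref{cl:coloring2}, yields the conclusion. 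For any such $p$, both images $\phi_0(p), \phi_1(p)$ are $L_{t_i}$-paths of the same odd length and directed length. Exploiting the sign opposition of the stage-$n$ bridge contributions together with the rapid growth of $|\Sigma(d_{t_i}(m))|$, one shows that a large $|\dlength(p)|$ forces the images in $L_{t_0}$ and $L_{t_1}$ to cross stage-$n$ bridges a net number of times that produces contributions of opposite sign; since both signed totals must equal $\dlength(p)$, this bounds $|\dlength(p)|$ by a quantity depending only on earlier stage sums.

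\textbf{Main obstacle.} The branching construction is a routine adaptation of the $G_0$-style argument, but the antibasis step is genuinely delicate. The difficulty is that $\phi_0(p)$ and $\phi_1(p)$ are $L_{t_i}$-paths in separate oriented targets and may traverse bridges at any stages independently; the only global tether between them is the shared total directed length $\dlength(p)$. Extracting the required bound on $|\dlength(p)|$ therefore demands a careful quantitative accounting of bridge-crossings stage by stage, isolating the stage-$n$ contribution as algebraically incompatible across $t_0, t_1$ while controlling the error from higher stages. It may be necessary to sharpen the construction so that $b_{t_0}$ and $b_{t_1}$ agree at all stages other than the first where $t_0, t_1$ differ, or alternatively to introduce a stratification of $X_H$ by $(\pi_{c,N}\circ\phi_0, \pi_{c,N}\circ\phi_1)$ and work stage-by-stage; either route requires substantial bookkeeping before Claim~\ref{cl:coloring2} can be applied to finish.
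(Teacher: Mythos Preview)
Your branching construction in stage (i) is sound: flipping the sign of the newly added path at each step does yield, for every $t\in\Cantorspace$, an \oddpair{\aleph_0} $b_t$ with $\mathbb{L}_{b_t}\leq_c G$ and $\Borelchromatic{\mathbb{L}_{b_t}}=3$. This part is a routine variant of the proof of Theorem~\ref{t:g0style}.

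The gap is in stage (ii). Your plan is to bound $|\dlength(p)|$ uniformly for odd $H$-paths $p$ by comparing $\phi_0(p)$ and $\phi_1(p)$, but the only information linking these two images is their common total directed length. The images live in two unrelated oriented line graphs and may use bridges at completely different stages; in particular, a path in $\mathbb{L}_{b_{t_i}}$ can realize an arbitrarily large directed length by crossing a single bridge at some stage $m\gg n$ without ever touching the stage-$n$ bridge, so the sign discrepancy at stage $n$ places no constraint whatsoever on $\dlength(p)$. Your own ``main obstacle'' paragraph identifies this correctly, but the proposed fix---forcing $b_{t_0}$ and $b_{t_1}$ to agree at every stage except the first where $t_0,t_1$ differ---is incompatible with having a continuum-sized family: transitivity of ``agree except at one coordinate'' along a chain of $t$'s collapses the family. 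More fundamentally, a single sign flip at one stage does not separate the \emph{sets of realizable directed lengths} in the two targets, and that set is the only homomorphism-invariant in play.

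The paper's proof proceeds along a genuinely different line. It first reduces (via Theorem~\ref{t:g0style}) to $G=\mathbb{L}_b$ for a single $b$ with a strong growth property, and then takes all $L_t$ to be \emph{restrictions of this one graph} to invariant saturations $[P_t]_{E_c}$ of perfect sets $P_t$, where the $P_t$ are indexed by a perfect almost disjoint family of subsets of $\N$. The invariant used is the didistance set $D(P_t)=\{\didist{x}{y}{}\mid x,y\in P_t\}$: almost disjointness together with the growth condition forces $D(P_t)$ and $D(P_{t'})$ to be multiplicatively separated (no ratio in $[1/4,4]$ beyond some threshold). If some $\mathbb{L}_{b^*}$ with $\chi_B>2$ mapped to both $L_t$ and $L_{t'}$, pulling back $P_t,P_{t'}$ gives non-meager Borel sets $B,B'\subseteq\mathbb{X}_{c^*}$ with $D(B)\subseteq D(P_t)$ and $D(B')\subseteq D(P_{t'})$; a Baire-category argument then shows that both $D(B)$ and $D(B')$ must meet the interval $[\tfrac{1}{2}\Sigma(d^*(i)),\,2\Sigma(d^*(i))]$ for all large $i$, contradicting the multiplicative separation. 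The point is that the separation is achieved by controlling \emph{which stages} contribute to the didistance set, not by flipping signs at a common stage.
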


	Let us start with some definitions. Assume that $\digraph{}{G}{}$ is an acyclic oriented graph on a space $X$, and let $B \subseteq X$. Using Claim \ref{c:distances} we can define the \definedterm{didistance set} of $B$ by letting $\distanceset{B}{\digraph{}{G}{}}=\{n \in \Z: \exists x,y \in B \ \didist{x}{y}{\digraph{}{G}{}}=n\}$. 
	
	\begin{lemma}
		\label{l:fromcanonical}
		Assume that $\digraph{}{L}{}$ is a $\leq 2$-regular acyclic Borel oriented graph on the space $X$, $b=(c,d)$ is an \oddpair{\aleph_0}, and $\digraph{b}{L}{b} \leq_B \digraph{0}{L}{}$. Assume that $C$ is a Borel $\connecteder{\digraph{}{L}{}}$-complete set. There exists a non-meager Borel set $B \subseteq \underlyingspace{b}{X}{c}$ such that $\distanceset{B}{\digraph{b}{L}{b}} \subseteq  \distanceset{C}{\digraph{}{L}{}}$.
	\end{lemma}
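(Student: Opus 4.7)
The plan is to transfer didistance information from $\digraph{b}{L}{b}$ to $\digraph{}{L}{}$ via the given Borel homomorphism combined with a Borel selector into $C$, then use Baire category to extract a non-meager Borel set on which the relevant ``offset'' into $C$ is constant.

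Fix a Borel homomorphism $\phi \from \underlyingspace{b}{X}{c} \to X$ realizing $\digraph{b}{L}{b} \leq_B \digraph{}{L}{}$. Since $\digraph{}{L}{}$ has vertex degree at most two, $\connecteder{\digraph{}{L}{}}$ is a countable Borel equivalence relation, so a standard Lusin--Novikov-style uniformization applied to the $\connecteder{\digraph{}{L}{}}$-complete Borel set $C$ yields a Borel map $g \from X \to C$ with $g(x) \connecteder{\digraph{}{L}{}} x$ for every $x \in X$. Let $f = g \circ \phi$ and define $n \from \underlyingspace{b}{X}{c} \to \Z$ by $n(x) = \didist{\phi(x)}{f(x)}{\digraph{}{L}{}}$; this is Borel and well-defined by Claim \ref{c:distances}. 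Since $\underlyingspace{b}{X}{c}$ is Polish, the countable Borel partition $\underlyingspace{b}{X}{c} = \bigcup_{k \in \Z} n^{-1}(\{k\})$ cannot consist entirely of meager pieces, so some fiber $B = n^{-1}(\{n_0\})$ is non-meager Borel, and this will be the desired set.

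To finish, I would check that $\distanceset{B}{\digraph{b}{L}{b}} \subseteq \distanceset{C}{\digraph{}{L}{}}$. Given $x, y \in B$ with $\didist{x}{y}{\digraph{b}{L}{b}} = m$, the image under $\phi$ of any $\digraph{b}{L}{b}$-path from $x$ to $y$ is a $\digraph{}{L}{}$-walk from $\phi(x)$ to $\phi(y)$ of directed length $m$, which by Claim \ref{c:distances} equals $\didist{\phi(x)}{\phi(y)}{\digraph{}{L}{}}$. Concatenating a witness for $\didist{f(x)}{\phi(x)}{\digraph{}{L}{}} = -n_0$, a $\phi(x)$-to-$\phi(y)$ path of directed length $m$, and a witness for $\didist{\phi(y)}{f(y)}{\digraph{}{L}{}} = n_0$ yields a walk from $f(x)$ to $f(y)$ of directed length $m$, so $\didist{f(x)}{f(y)}{\digraph{}{L}{}} = m$, placing $m$ in $\distanceset{C}{\digraph{}{L}{}}$ since $f(x), f(y) \in C$. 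The main subtlety is that $\phi$ need not be injective and thus sends paths only to walks; the acyclicity of $\digraph{}{L}{}$, through Claim \ref{c:distances}, is precisely what makes the directed length of such a walk coincide with the didistance between its endpoints, and this is what keeps the whole ledger of directed lengths additive.
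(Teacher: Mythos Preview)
Your argument is correct, and it takes a genuinely different route from the paper's.

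The paper instead defines $M = \{x : \phi\text{ does not map }[x]_{\connecteder{c}}\text{ onto }[\phi(x)]_{\connecteder{\digraph{}{L}{}}}\}$, invokes Claim~\ref{cl:basiclines}\eqref{c:ontohomo} to get $\Borelchromatic{\restriction{\digraph{b}{L}{b}}{M}} \le 2$, and then uses Claim~\ref{c:basic}\eqref{c:category} to conclude that $M$ is meager. With $M$ removed, $\phi$ is onto each remaining $L$-component, so one can simply take $B = \phi^{-1}(C) \setminus M$; then $\phi(x), \phi(y)$ already lie in $C$ and no offset correction is needed. Non-meagerness of $B$ is obtained from the saturation property in Claim~\ref{c:basic}\eqref{c:meagersaturation}.

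Your approach trades the paper's internal machinery (the onto-set $M$, the two-colorability claim, and the category-versus-chromatic-number argument) for a single external ingredient, the Lusin--Novikov selector, together with a pigeonhole on the countably many offset values. This is arguably more elementary and more portable, since it does not rely on the specific structure of $\digraph{b}{L}{b}$ beyond $\underlyingspace{b}{X}{c}$ being Polish. The paper's route, on the other hand, reuses lemmas already established for other purposes and yields the slightly stronger conclusion that $\phi(B) \subseteq C$ outright. One minor remark: your caution about ``walks versus paths'' is unnecessary here, since the paper's definition of a $G$-path does not require the vertices to be distinct, so Claim~\ref{c:distances} applies directly to the image sequence.
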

	\begin{proof}
		Let $\phi$ be a Borel homomorphism from $\digraph{b}{L}{b}$ to $\digraph{0}{L}{}$, and let $M=\{x \in \underlyingspace{b}{X}{c}: \phi \text{ mapping }[x]_{\connecteder{}} \to [\phi(x)]_{\connecteder{\linegraph{}{L}{}}} \text{ is not onto}\}$. By Claim \ref{cl:basiclines} we have $\Borelchromatic{\restriction{\digraph{b}{L}{b}}{ M}} \leq 2$. By the invariance of $M$ and \eqref{c:category} of Claim \ref{c:basic} it must be meager. Define $B=\phi^{-1}(C) \setminus M$, we check that $B$ is non-meager. Note that, as $C$ is an $\connecteder{\digraph{}{L}{}}$-complete set,  the set $B$ is a $\restriction{\connecteder{c}}{(\underlyingspace{b}{X}{c} \setminus M)}$-complete. As $[B]_{\connecteder{c}} \supseteq \underlyingspace{b}{X}{c} \setminus M$ is co-meager, it follows from \eqref{c:meagersaturation} of Claim \ref{c:basic} that $B$ cannot be meager.

		Finally, if $x,y \in B$, let $p=((z_0,\dots,z_l),d_p)$ be an $\digraph{b}{L}{b}$-path with $z_0=x$ and $z_l=y$. Then, $p'=((\phi(z_0),\dots,\phi(z_l)),d_p)$ is an $\digraph{}{L}{}$-path, with $\dlength(p')=\dlength(p)$. It follows that $\didist{\phi(x)}{\phi(y)}{\digraph{b}{L}{b}} \in\distanceset{C}{\digraph{}{L}{}}$. Thus, $\distanceset{B}{\digraph{b}{L}{b}} \subseteq  \distanceset{C}{\digraph{}{L}{}}$.
	\end{proof}
	
	In order to carry out our construction we will impose a growth condition on the approximations to our graphs. Assume that $b=(c,d)$ is an \oddpair{\aleph_0}. We say that \definedterm{$b$ has property $(*)$} if for every $i \in \N $ we have \[\Sigma(d(i))> 8 \cdot \sum_{j<i} 2^{i-j} \cdot \absolutevalue{\Sigma(d(j))}.\] 
	\begin{lemma}
		\label{l:crucial}
		Suppose that $b=(c,d)$ is an \oddpair{\aleph_0} with property $(*)$. Then there exists a collection $(P_t)_{t \in 2^\N}$ of perfect subsets of $\underlyingspace{b}{X}{c}$ such that for every $t\neq t'$ we have
		\begin{enumerate}
			\item \label{c:infinitesets} $\absolutevalue{\distanceset{P_t}{\digraph{b}{L}{b}}}=\aleph_0$,

			\item \label{c:finiteintersection} There exists an $i_0 \in \N$ such that for every $k,k' \geq i_0$ if  $k \in \distanceset{P_t}{\digraph{b}{L}{b}}$ and $k' \in \distanceset{P_{t'}}{\digraph{b}{L}{b}}$ then $\frac{k}{k'} \not \in [\frac{1}{4},4]$. 
			
			\item \label{c:chromaticnumbers} $\Borelchromatic{\restriction{\digraph{b}{L}{b}}{[P_t]_{\connecteder{c}}}}=3$.
		\end{enumerate}
		
	\end{lemma}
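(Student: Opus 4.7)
The plan is to index the $P_t$ by a classical family $(I_t)_{t \in 2^\N}$ of pairwise almost disjoint infinite subsets of $\N$, setting
\[
P_t = \set{(0, 0, r) \in \underlyingspace{b}{X}{c}}[r(i) = 0 \text{ for all } i \notin I_t],
\]
so that each $P_t$ is a closed subset of $\underlyingspace{b}{X}{c}$ homeomorphic to $\functions{I_t}{2}$, in particular perfect. The technical engine is a didistance estimate driven by property $(*)$: letting $D_n$ denote the didiameter of $\digraph{}{L}{b,n}$, the inductive bound $D_{n+1} \leq 2 D_n + \absolutevalue{\Sigma(d(n+1))}$ gives $D_n \leq \sum_{j \leq n} 2^{n-j} \absolutevalue{\Sigma(d(j))}$, from which $(*)$ implies $\absolutevalue{\Sigma(d(n+1))} \geq 16 D_n$. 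Writing $S_n = \absolutevalue{\Sigma(d(n+1))}$, this yields $S_n \geq 16 S_{n-1}$ for every $n \geq 1$. For two points in the same $\digraph{b}{L}{b}$-component whose highest differing bit is $m$, the unique tree path decomposes as a within-copy segment, the connector at stage $m+1$ of didirected length $\pm\Sigma(d(m+1))$, and another within-copy segment; since each within-copy contribution has absolute value at most $D_m \leq S_m/16$, the didistance lies in $\pm[(7/8)S_m, (9/8)S_m]$, and equals $\pm\Sigma(d(m+1))$ exactly when only the single bit $m$ differs.

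From this estimate the first two conditions follow readily. For \eqref{c:infinitesets}, letting $m$ range through $I_t$ and choosing pairs in $P_t$ that differ at exactly bit $m$ produces infinitely many distinct didistances $\pm\Sigma(d(m+1))$. For \eqref{c:finiteintersection}, given $t \neq t'$ set $i_1 = 1 + \max(I_t \cap I_{t'})$ and $i_0 = \lceil (9/8) S_{i_1-1} \rceil + 1$. Any $k \in \distanceset{P_t}{\digraph{b}{L}{b}}$ with $k \geq i_0$ must arise from a pair whose highest differing bit $m$ lies in $I_t$ and satisfies $m \geq i_1$, hence $m \in I_t \setminus I_{t'}$; symmetrically any $k' \in \distanceset{P_{t'}}{\digraph{b}{L}{b}}$ with $k' \geq i_0$ gives some $m' \in I_{t'} \setminus I_t$. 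Consequently $m \neq m'$, and the factor-$16$ growth of $S_n$ forces $k/k'$ outside $[1/4, 4]$.

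Condition \eqref{c:chromaticnumbers} is the most delicate. The upper bound $\Borelchromatic{\restriction{\digraph{b}{L}{b}}{[P_t]_{\connecteder{c}}}} \leq 3$ is the standard degree bound from \eqref{c:regularity} of Claim \ref{c:basic}. For the lower bound I would adapt the argument from the proof of \eqref{c:category} localized to $P_t$: supposing $c$ is a Borel $2$-coloring of $\restriction{\digraph{b}{L}{b}}{[P_t]_{\connecteder{c}}}$, restrict it to the Polish space $P_t \cong \functions{I_t}{2}$ and use Baire category to produce a basic open $[\sigma] \subseteq P_t$ (determined by some finite $F \subseteq I_t$) in which $c^{-1}(j) \cap P_t$ is comeager for some $j < 2$. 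Flipping any bit $i \in I_t \setminus F$ defines a homeomorphism of $P_t$ preserving $[\sigma]$ that, by \eqref{c:oddistance} of Claim \ref{c:basic}, produces an odd $\digraph{b}{L}{b}$-distance and so interchanges $c^{-1}(0)$ and $c^{-1}(1)$; hence $c^{-1}(1-j) \cap [\sigma]$ is also comeager, a contradiction. The main obstacle I anticipate is the detailed bookkeeping of the didistance estimate across nested copies, where property $(*)$ must be invoked with care so that contributions from lower levels do not accumulate.
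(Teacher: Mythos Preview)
Your proposal is correct and follows essentially the same approach as the paper: both define $P_t$ as the set of points $(0,0,r)$ with $r$ supported on a member of a fixed perfect almost disjoint family, derive a didistance estimate from property $(*)$ by bounding the within-copy contribution against the connector $\Sigma(d(\cdot))$, and handle \eqref{c:chromaticnumbers} by the Baire-category argument of Claim~\ref{c:basic}\eqref{c:category} (which the paper merely cites, whereas you spell out the bit-flip homeomorphism on $P_t$). Your indexing (with the connector at level $m+1$ for highest differing bit $m$) and your slightly sharper interval $[(7/8)S_m,(9/8)S_m]$ differ cosmetically from the paper's $[\tfrac12\Sigma(d(i)),2\Sigma(d(i))]$, but the content is the same.
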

	\begin{proof}
		Let $S \subseteq 2^\N$ be a perfect almost disjoint family of infinite sets (identifying $2^\N$ with $\mathcal{P}(\N)$). Of course (using a bijection between $S$ and $2^\N$), it is enough to construct a family indexed by the elements of $S$. 	
		
		For $t \in  S$, let \[P_t=\{(n,k,r) \in \underlyingspace{b}{X}{c}:n=k=0, \forall i \in \N \ (t(i)=0 \implies r(i)=0)\}.\] 
		
		\noindent
		{\sc Claim.} Assume that $x \neq y \in P_t$ and $x\connecteder{c}y$. Let $x=(0,0,r^x), y=(0,0,r^y)$, and let $i \in \N$ be maximal with with $r^x(i) \neq r^y(i)$ (such an $i$ exists by $x\connecteder{c}y$). Then $t(i)=1$ and $\absolutevalue{\didist{x}{y}{\digraph{b}{L}{b}}} \in [\frac{\Sigma(d(i))}{2},2\cdot \Sigma(d(i))]$.
		
		\begin{proof}
			It is obvious from the definition of $P_t$, that we have $t(i)=1$.
			
			Moreover, it follows from the definition of $\digraph{b}{L}{b}$ and the choice of $i$ that $\didist{x}{y}{\digraph{b}{L}{b}}=\didist{\pi_{c,i}(x)}{\pi_{c,i}(y)}{\digraph{}{L}{b,i}}$, so it is enough to give an estimation on the latter. Since $r^x(i) \neq r^y(i)$, $\pi_{c,i}(x)$ and $\pi_{c,i}(y)$ are in different copies of $\digraph{}{L}{b,i-1}$ in $\digraph{}{L}{b,i}$. But then,
			
			\begin{multline*}\Sigma(d(i))-\absolutevalue{\dlength(\digraph{}{L}{b,i-1})})\leq \absolutevalue{\didist{\pi_{c,i}(x)}{\pi_{c,i}(y)}{}}\\ \leq \Sigma(d(i))+\absolutevalue{\dlength(\digraph{}{L}{b,i-1})}.\end{multline*}
			So, by an easy induction we have
			\begin{multline*}\Sigma(d(i))-\sum_{j <i} 2^{i-j} \cdot \Sigma(d(j)) \leq \absolutevalue{\didist{\pi_{c,i}(x)}{\pi_{c,i}(y)}{}}\\ \leq \Sigma(d(i))+\sum_{j <i} 2^{i-j} \cdot \Sigma(d(j)),\end{multline*}
			which implies our statement by $(*)$.
		\end{proof}
		
		The Claim clearly implies property \eqref{c:infinitesets}.
		
		Assume that $t \not = t'$ are given. By the choice of $S$ there exists an $i^*_0 \in \N$ such that $t \cap t' \subseteq i^*_0$, let $i_0=\frac{\Sigma(d(i^*_0))}{2}$ and assume that $k,k' \geq i_0$ with $k \in \distanceset{P_t}{\digraph{b}{L}{b}}, k' \in \distanceset{P_{t'}}{\digraph{b}{L}{b}}$.  The choice of $i^*_0$ and $i_0$ together with the Claim and $(*)$ yields that $k \in [\frac{\Sigma(d(i))}{2},2\cdot \Sigma(d(i))]$ and $k' \in [\frac{\Sigma(d(i'))}{2},2\cdot \Sigma(d(i'))]$ with $i \neq i'$. But then $\frac{k}{k'} \not \in [\frac{1}{4},4]$ follows from $(*)$, showing \eqref{c:finiteintersection}.
		
		Finally, a Baire category argument analogous to the one in the proof of \eqref{c:category} of Claim \ref{c:basic} yields that \eqref{c:chromaticnumbers} holds for each $t \in S$. 
	\end{proof}
	
	\begin{lemma}
		\label{l:saturation}
		Assume that $b^*=(c^*,d^*)$ is an \oddpair{\aleph_0} with property $(*)$, and $B \subseteq \underlyingspace{b}{X}{c^*}$ is a Borel set so that the set $[B]_{\connecteder{c^*}}$ is co-meager. Then there exists an $i_1 \in \N$ such that for every $i > i_1$ we have that $ \distanceset{B}{\digraph{b}{L}{b^*}} \cap [\frac{\Sigma(d^*(i))}{2},2 \cdot \Sigma(d^*(i))] \neq \emptyset$.
	\end{lemma}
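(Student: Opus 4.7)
My plan is to reduce the statement to a Baire-category argument inside a well-chosen basic open neighborhood, combined with a distance estimate essentially identical to the one in the proof of Lemma \ref{l:crucial}.

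First, I would show that $B$ itself is non-meager. By \eqref{c:meagersaturation} of Claim \ref{c:basic}, any meager Borel set has meager $\connecteder{c^*}$-saturation, so the co-meagerness of $[B]_{\connecteder{c^*}}$ forces $B$ to be non-meager. Since $B$ is Borel and non-meager, it is co-meager in some basic open set $U = [(n_0, k_0, t_0)]$ of $\underlyingspace{b}{X}{c^*}$. I would then set $i_1 = n_0 + \absolutevalue{t_0}$.

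Next, given $i > i_1$, the aim is to exhibit two points of $B$ at didistance roughly $\Sigma(d^*(i))$. I would choose the position $p = i - n_0 - 1 \ge \absolutevalue{t_0}$ in the tail and consider the bit-flip map $\sigma \from U \to U$ sending $(n_0, k_0, r)$ to $(n_0, k_0, r')$ with $r'(p) = 1 - r(p)$ and $r'(j) = r(j)$ otherwise. Because $p \geq \absolutevalue{t_0}$, the map $\sigma$ preserves $U$ setwise and is a homeomorphic involution, so it preserves meagerness. Both $B \cap U$ and $\sigma^{-1}(B \cap U)$ are therefore co-meager in $U$, and by the Baire category theorem their intersection is non-empty. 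Picking any $x$ in this intersection yields $x, \sigma(x) \in B$ differing only at position $p$ of the tail, and by \eqref{c:connected} of Claim \ref{c:basic} they share a $\connecteder{c^*}$-component.

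Finally, I would bound the didistance by mimicking the calculation in the proof of Lemma \ref{l:crucial}. The choice of $p$ ensures that $\pi_{c^*, i}(x)$ and $\pi_{c^*, i}(\sigma(x))$ share a common initial segment in $\underlyingspace{}{X}{c^*, i-1}$ and differ exactly at the terminal coordinate, placing them in two distinct copies of $\digraph{}{L}{b^*, i-1}$ inside $\digraph{}{L}{b^*, i}$. Any path between them must traverse the level-$i$ chain, which contributes directed length $\Sigma(d^*(i))$, while the detours within the two copies contribute in absolute value at most $2\absolutevalue{\dlength(\digraph{}{L}{b^*, i-1})} \leq 2\sum_{j < i} 2^{i-j} \absolutevalue{\Sigma(d^*(j))}$; property $(*)$ then pins $\absolutevalue{\didist{x}{\sigma(x)}{\digraph{b}{L}{b^*}}}$ into $[\Sigma(d^*(i))/2, 2\Sigma(d^*(i))]$, as required. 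The only real subtlety is the index bookkeeping to line up the tail position $p$ with the terminal coordinate of $\pi_{c^*, i}(x)$, accounting for the offset caused by the base level $n_0$ and prefix $t_0$; beyond that every ingredient is already available from Claim \ref{c:basic} and the distance analysis of Lemma \ref{l:crucial}.
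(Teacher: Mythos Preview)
Your proposal is correct and follows essentially the same route as the paper: localize $B$ to a basic open set by Baire category, flip the bit at the tail position corresponding to level $i$, and then quote the didistance estimate governed by property~$(*)$. The only cosmetic difference is that the paper phrases the Baire-category step as ``shrink $B$ by the $\connecteder{c^*}$-saturation of the meager complement so that $B$ becomes invariant under the flip,'' whereas you invoke the bit-flip as a self-homeomorphism of $U$ and intersect $B\cap U$ with its image; these are two wordings of the same argument, and your index bookkeeping (in particular $p=i-n_0-1$ and $i_1=n_0+\absolutevalue{t_0}$) is exactly what is needed.
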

	\begin{proof}
		By our assumption on $B$ and \eqref{c:meagersaturation} of Claim \ref{c:basic}, we can find an $i_1$ and a non-empty basic open set of the form $[(i_1,k,\sigma)]$ in which $B$ is co-meager. Let $i>i_1$. By shrinking $B$ with the $\connecteder{c^*}$ -saturation of the meager set $[(i,k,\sigma)] \setminus B$ (which is also a meager set), we can assume that $[(i-1,k,\sigma)] \cap B=[(i-1,k,\sigma)] \cap [B]_{\connecteder{c^*}}$. In particular, we can find an $r \in 2^\N$ so that $(i-1,k,\sigma \concatt (0) \concatt r), (i-1,k,\sigma \concatt (1) \concatt r) \in B$ hold, let $x=(i-1,k,\sigma \concatt (0) \concatt r),$ and $y=(i-1,k,\sigma \concatt (1) \concatt r)$. 
		
		Again, it is clear that $\didist{x}{y}{\digraph{b}{L}{b^*}}=\didist{\pi_{c,i}(x)}{\pi_{c,i}(y)}{\digraph{}{L}{b^*,i}}$, moreover, $\pi_{c,i}(x)$ and $\pi_{c,i}(y)$ are in different copies of $\digraph{}{L}{b^*,i-1}$ in $\digraph{}{L}{b^*,i}$. So, \begin{multline*}\Sigma(d^*(i))-\sum_{j <i} 2^{i-j} \cdot \Sigma(d^*(j)) \leq \absolutevalue{\didist{\pi_{c,i}(x)}{\pi_{c,i}(y)}{}} \\ \leq \Sigma(d^*(i))+\sum_{j <i} 2^{i-j} \cdot \Sigma(d^*(j)),\end{multline*}
		which implies our statement by $(*)$.
	\end{proof}
	
	\begin{proof}[Proof of Theorem \ref{t:antibasis2}]
		
		By Theorem \ref{t:g0style} without loss of generality we can assume that $G=\digraph{b}{L}{b}$ for some \oddpair{\aleph_0} $b=(c,d)$ with property $(*)$. Now, using Lemma \ref{l:crucial} we obtain a family $(P_t)_{t \in  2^\N}$ of perfect subsets of $\underlyingspace{b}{X}{c}$ having properties \eqref{c:infinitesets}--\eqref{c:chromaticnumbers}. For each $t \in  2^\N$ let $\digraph{}{L}{t}=\restriction{\digraph{b}{L}{b}}{[P_t]_{\connecteder{c}}}$. We show that $(\digraph{}{L}{t})_{t \in 2^\N}$ satisfies the requirements of the theorem. The condition on the Borel chromatic numbers is clear from \eqref{c:chromaticnumbers} of Lemma \ref{l:crucial}.

		Let $t,t' \in 2^\N$ be  distinct. Assume that $\digraph{a}{H}{}\leq_B \digraph{a}{L}{t}, \digraph{a}{L}{t'}$ with $\Borelchromatic{\digraph{}{H}{}}=3$. Then, by Theorem \ref{t:g0style} we can assume that $\digraph{a}{H}{}=\digraph{b}{L}{b^*}$ and that $b^*$ has property $(*)$.  As $P_t$ and $P_{t'}$ are $\connecteder{\digraph{}{L}{t}}$ and $\connecteder{\digraph{}{L}{t'}}$-complete sets, using Lemma \ref{l:fromcanonical} we obtain non-meager Borel sets $B,B'$ in $\underlyingspace{b}{X}{c^*}$, with $\distanceset{B}{\digraph{b}{L}{b^*}} \subseteq \distanceset{P_t}{\digraph{}{L}{t}}$ and $\distanceset{B'}{\digraph{b}{L}{b^*}} \subseteq \distanceset{P_{t'}}{\digraph{}{L}{t'}}.$ Let $i>i_0,i_1,i'_1$, where $i_0$ comes from \eqref{c:finiteintersection} of Lemma \ref{l:crucial}, while $i_1,i'_1$ are obtained from applying Lemma \ref{l:saturation} to $B$ and $B'$.
		
		By Lemma \ref{l:saturation} we can find $k \in \distanceset{B}{\digraph{b}{L}{b^*}} \cap [\frac{\Sigma(d^*(i))}{2},2 \cdot \Sigma(d^*(i)))]$, $k' \in \distanceset{B'}{\digraph{b}{L}{b^*}} \cap [\frac{\Sigma(d^*(i))}{2},2 \cdot \Sigma(d^*(i))]$. But then $ \frac{k}{k'} \in [\frac{1}{4},4]$, which contradicts $k \in \distanceset{P_t}{\digraph{}{L}{t}}, k' \in \distanceset{P_{t'}}{\digraph{}{L}{t'}}$ and  \eqref{c:finiteintersection} of Lemma \ref{l:crucial}.
	\end{proof}
	
	\section{Open problems}
	\label{s:problems}	
	
	We conclude with a number of open problems. First, it is not clear, how Theorem \ref{t:large_gaps} can be generalized to arbitrary Borel graphs. 
	
	\begin{problem}
		Characterize the Borel graphs with Borel chromatic number $3$, which admit a Borel homomorphism to each Borel graph $G$ with $\Borelchromatic{G}>2$, (or, equivalently, the ones which are $\leq_B \linegraph{b}{L}{0}$). 
	\end{problem}  
	
	The \definedterm{product} of graphs $G$ on $X$ and $G'$ on $X'$ is the graph on $X \times X'$ given by $((x,x') , (y,y')) \in G \times G' \iff (x,y) \in G \text{ and } (x',y') \in G'$. The Borel version of Hedetniemi's conjecture reads as follows: Is it the case that $\Borelchromatic{G \times G'} = \min \{\Borelchromatic{G}, \Borelchromatic{G'}\}$?
	
	Theorem \ref{t:main_intro} implies that the answer is affirmative, if $\min\{\Borelchromatic{G},\Borelchromatic{G'}\} \leq 3$. El-Zahar and Sauer \cite{zahar} showed that for finite graphs the bound $4$ already implies an affirmative answer. Hence the following problem is quite natural.
	
	\begin{problem}
		Assume that $G,G'$ are Borel graphs on standard Borel spaces, and 
		$\min\{\Borelchromatic{G},\Borelchromatic{G'}\} \leq 4$. Is it true that $\Borelchromatic{G \times G'} = \min \{\Borelchromatic{G}, \Borelchromatic{G'}\}$?
		
	\end{problem}
	
	Note that a recent breakthrough result of Shitov \cite{shitov2019counterexamples} is that the answer is negative in general, there exists a counterexample for finite graphs.
	
	The $\mathbb{G}_0$-dichotomy, the results in \cite{toden}, and the current paper give a complete description of the existence of simple bases for Borel graphs with a given Borel chromatic number. However, the natural reformulation of the notion of chromatic numbers in terms of homomorphism raises the following problem:
	\begin{problem}
		Characterize the Borel graphs $H$ so that the collection $\{G:G \text{ is a Borel graph, } G \not \leq_B H\}$ has a single element basis.
	\end{problem}
	
	It is conceivable that such a characterization is impossible due to a complexity barrier.  
	
	Babai's celebrated results \cite{babai2016graph} suggest that among finite graphs the isomorphism relation is simpler than the homomorphism relation. It would be interesting to know the answer to the analogous question in the case of Borel graphs. 
	
	\begin{problem}
		Determine the projective complexity of the isomorphism relation on Borel graphs on Polish spaces.
	\end{problem}

	\bibliographystyle{abbrv}
	\bibliography{bibliography}
	
\end{document}